\newtheorem{theorem}{Theorem}
\newtheorem{lemma}[theorem]{Lemma}
\newtheorem{proposition}[theorem]{Proposition}
\newtheorem{problem}{Problem}
\newcommand\AAA{{\cal A}}
\newcommand\dd{\,\mbox{d}}
\newcommand\NN{{\mathbb N}}
\newcommand\RR{{\mathbb R}}
\newcommand\cut[1]{\|#1\|_{\square}}
\newcommand\dcut[2]{\delta_{\square}(#1,#2)}
\newcommand\wsigma{\widehat{\sigma}}
\let\oldlceil\lceil
\let\oldrceil\rceil
\renewcommand{\lceil}{\left\oldlceil}
\renewcommand{\rceil}{\right\oldrceil}
\DeclareTextCompositeCommand{\v}{OT1}{l}{l\nobreak\hspace{-.1em}'}
\DeclareTextCompositeCommand{\v}{OT1}{t}{t\nobreak\hspace{-.1em}'\nobreak\hspace{-.15em}}
\begin{document}
\title{Non-bipartite $k$-common graphs\thanks{The work of the first, second and fourth authors has received funding from the European Research Council (ERC) under the European Union's Horizon 2020 research and innovation programme (grant agreement No 648509). This publication reflects only its authors' view; the European Research Council Executive Agency is not responsible for any use that may be made of the information it contains. The first and the fourth were also supported by the MUNI Award in Science and Humanities of the Grant Agency of Masaryk University. The second author was also supported by the Leverhulme Trust Early Career Fellowship ECF-2018-534. The third author was supported by an NSERC Discovery grant. The fifth author was supported by IAS Founders' Circle funding provided by Cynthia and Robert Hillas.}}

\author{Daniel Kr{\'a}\v{l}\thanks{Faculty of Informatics, Masaryk University, Botanick\'a 68A, 602 00 Brno, Czech Republic, and Mathematics Institute, DIMAP and Department of Computer Science, University of Warwick, Coventry CV4 7AL, UK. E-mail: {\tt dkral@fi.muni.cz}.}\and
        Jonathan A. Noel\thanks{Department of Mathematics and Statistics, University of Victoria, David Turpin Building A425, 3800 Finnerty Road,  Victoria, Canada V8P 5C2. Previous affiliation: Mathematics Institute and DIMAP, University of Warwick, Coventry CV4 7AL, UK. E-mail: {\tt noelj@uvic.ca}.}\and
	Sergey Norin\thanks{Department of Mathematics and Statistics, McGill University, Montreal, Canada. E-mail: {\tt sergey.norin@mcgill.ca}.}\and
        Jan Volec\thanks{Department of Mathematics, Faculty of Nuclear Sciences and Physical Engineering, Czech Technical University in Prague, Trojanova 13, 120 00 Prague, Czech Republic. Previous affiliation: Faculty of Informatics, Masaryk University, Botanick\'a 68A, 602 00 Brno, Czech Republic. E-mail: {\tt jan@ucw.cz}.}\and
	Fan Wei\thanks{Department of Mathematics, Princeton University, Princeton, NJ, USA. Previous affiliation: School of Mathematics, Institute for Advanced Study, Princeton, USA. E-mail: {\tt fanw@princeton.edu}.}}

\date{} 
\maketitle

\begin{abstract}
A graph $H$ is $k$-common if the number of monochromatic copies of $H$ in a $k$-edge-coloring of $K_n$ is 
asymptotically minimized by a random coloring. 
For every $k$, we construct a connected non-bipartite $k$-common graph.
This resolves a problem raised by Jagger, \v{S}\v{t}ov\'{\i}\v{c}ek and Thomason [Combinatorica 16 (1996), 123--141].
We also show that a graph $H$ is $k$-common for every $k$ if and only if $H$ is Sidorenko and that
$H$ is locally $k$-common for every $k$ if and only if $H$ is locally Sidorenko.
\end{abstract}

\section{Introduction}
\label{sec:intro}

Ramsey's Theorem states that for every graph $H$ and integer $k\ge 2$,
there exists a natural number $R_k(H)$ such that if $N\geq R_k(H)$, then every $k$-edge-coloring of the complete 
graph $K_N$ with $N$ vertices contains a monochromatic copy of $H$.
We study the natural quantitative extension of this question, which was first considered by Goodman~\cite{Goo59}:
\emph{What is the minimum number of monochromatic copies of $H$ in a $k$-edge-coloring of $K_N$ for large $N$?}

A prevailing theme in Ramsey Theory, dating back to an idea of Erd\H{o}s~\cite{Erd47} from the 1940s,
is that one of the best ways to avoid monochromatic substructures is by coloring randomly.
Therefore, it would be natural to expect the answer to the above question
to be the number of monochromatic copies of $H$ in a uniformly random $k$-edge-coloring of $K_N$.
Following~\cite{JagST96}, we say that a graph $H$ is \emph{$k$-common}
if the uniformly random $k$-edge-coloring of $K_N$ asymptotically minimizes the number of monochromatic copies of $H$.
In other words,
the number of monochromatic (labeled) copies of $H$ in every $k$-edge-coloring of $K_N$ is at least
\[(1-o(1))\frac{N^{|H|}}{k^{\|H\|-1}}\]
where $|H|$ and $\|H\|$ denote the number of vertices and edges of $H$, respectively.
The most well-studied case is that of $2$-common graphs, which are often referred to as \emph{common} graphs;
however, we will always say $2$-common to avoid any ambiguity.

Only a handful of graphs are known to be $2$-common and even fewer are known to be $k$-common for $k\ge 3$.
The well-known Goodman Bound~\cite{Goo59} implies that $K_3$ is $2$-common; another proof was given by Lorden~\cite{Lor62}.
This result led Erd\H{o}s~\cite{Erd62} to conjecture that every complete graph is $2$-common
and Burr and Rosta~\cite{BurR80} to extend the conjecture to all graphs.
We now know that $2$-common graphs are far more scarce than Erd\H{o}s, Burr and Rosta had anticipated,
in particular, every non-bipartite graph is a subgraph of a (connected) graph that is not $2$-common~\cite{Fox07}.
Sidorenko~\cite{Sid89} disproved the Burr--Rosta Conjecture by showing that a triangle with a pendant edge is not $2$-common.
Around the same time, Thomason~\cite{Tho89} showed that $K_p$ is not $2$-common for any $p\geq4$,
thereby disproving the original conjecture of Erd\H{o}s~\cite{Erd62}.
Additional constructions showing that $K_p$ is not $2$-common for $p\geq4$ have since been found~\cite{Tho97,FraR93,Fra02}.
Determining the asymptotics of the minimum number of monochromatic copies of $K_4$ in $2$-edge-colorings of large complete graphs
continues to attract a good amount of attention~\cite{Gir79,Nie,Spe11} and
remains one of the most mysterious problems in extremal graph theory (with no conjectured answer).

Jagger, \v{S}\v{t}ov\'{\i}\v{c}ek and Thomason~\cite[Theorem~12]{JagST96} extended the result from~\cite{Tho89} by showing that
no graph containing a copy of $K_4$ is $2$-common.
On the positive side,
Sidorenko~\cite{Sid89} showed that all odd cycles are $2$-common and
Jagger, \v{S}\v{t}ov\'{\i}\v{c}ek and Thomason~\cite[Theorem~8]{JagST96} that all even wheels are $2$-common.
Additional examples of $2$-common graphs can be obtained by certain gluing operations~\cite{JagST96,Sid96}.
However, these operations do not increase the chromatic number and,
for a long time, no examples of $2$-common graphs with chromatic number greater than three were known.
Only in 2012, the $5$-wheel, which has chromatic number four, was shown to be $2$-common~\cite{HatHKNR12} using
Razborov's flag algebra method~\cite{Raz07}; this result settled a problem of~\cite{JagST96}.

Much less is known about $k$-common graphs for $k\ge 3$.
Cummings and Young~\cite{CumY11} proved that every $3$-common graph is triangle-free, which implies that 
the same is true for $k$-common graphs for any $k\ge 3$ (see Section~\ref{sec:prelim} for details).
The only known examples of $k$-common graphs for $k\ge 3$ are bipartite graphs that were known to be Sidorenko.
Jagger, S\v{t}ov\'i\v{c}ek and Thomason~\cite[Section~5]{JagST96} 
asked about the existence of non-bipartite $k$-common graphs;
no examples of such graphs are known, even for $k=3$.
We resolve this by showing the following.
\begin{theorem}
\label{thm:append}
For every $k\ge 2$, there exists $n_k$ such that, for every $n\ge n_k$,
the graph obtained from $K_{2n,2n}$ by pasting a copy of $C_5$ on every 
second vertex in one of the two parts of $K_{2n,2n}$ is $k$-common.
\end{theorem}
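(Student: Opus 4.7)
The natural framework is that of graphons: a $k$-edge-coloring of $K_N$ corresponds, as $N\to\infty$, to a measurable decomposition $W_1,\ldots,W_k\colon[0,1]^2\to[0,1]$ with $\sum_i W_i\equiv 1$, and the desired $k$-common property for $H_n$ translates into the analytic inequality
\[
\sum_{i=1}^k t(H_n,W_i)\;\ge\; k^{1-\|H_n\|},
\]
with equality at the uniform decomposition $W_i\equiv 1/k$. The plan is to verify this inequality for every fixed $k\ge 2$ once $n$ exceeds a threshold $n_k$; here $\|H_n\|=4n^2+5n$ and $|V(H_n)|=8n$.

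The next step is a two-stage integration tailored to the structure of $H_n$. Write $A=\{a_1,\ldots,a_{2n}\}$ and $B=\{b_1,\ldots,b_{2n}\}$ for the parts of the bipartite skeleton, with pasted $C_5$'s attached at $a_1,\ldots,a_n$, and let
\[
f_i(x)=\int W_i(x,z_1)W_i(z_1,z_2)W_i(z_2,z_3)W_i(z_3,z_4)W_i(z_4,x)\,\dd z_1\cdots\dd z_4
\]
denote the density of rooted $C_5$'s at $x$ in color $i$. Integrating out the cycle vertices first and then the $A$-vertices yields
\[
t(H_n,W_i)=\int_{[0,1]^{2n}} F_i(y)^n\,G_i(y)^n\,\dd y,
\]
where $G_i(y)=\int\prod_{\ell=1}^{2n}W_i(x,y_\ell)\,\dd x$ is the star kernel on $B$ in color $i$, and $F_i(y)=\int f_i(x)\prod_{\ell=1}^{2n}W_i(x,y_\ell)\,\dd x$ is its $C_5$-weighted counterpart; the two exponents $n$ correspond to the $n$ cycle-free and $n$ cycle-bearing vertices of $A$.

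The analytic heart of the proof is then to lower bound $\sum_i\int F_i^n G_i^n\,\dd y$ by $k^{1-\|H_n\|}$. Since $K_{2n,2n}$ is Sidorenko, weighted Sidorenko-type inequalities (H\"older applied to the star kernel $G_i$) give strong control on $\int G_i^{2n}\,\dd y$ in terms of $d_i:=\iint W_i$. Writing $F_i/G_i$ as a conditional expectation of $f_i$ against the probability measure with density proportional to $\prod_\ell W_i(x,y_\ell)$ and applying Jensen decouples the non-Sidorenko cycle factor from the bipartite factor at the cost of an error term that one hopes to absorb into the Sidorenko surplus of $K_{2n,2n}$. The sum over $i$ is then handled using the power-mean inequality together with $\sum_i d_i=1$.

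The main obstacle is precisely that $C_5$ is not Sidorenko: $t(C_5,W_i)$ may be much smaller than $d_i^5$ and even vanishes when $W_i$ is bipartite, so no naive factorization of the form $t(H_n,W_i)\ge t(K_{2n,2n},W_i)\cdot t(C_5,W_i)^n$ is available. The key observation to be turned into a quantitative estimate is that $f_i$ is positively correlated with the bipartite star kernel through the shared root $x$, and this correlation is amplified by the Sidorenko structure of the $4n^2$ edges of $K_{2n,2n}$. Because the bipartite exponent grows as $4n^2$ while the $C_5$ exponent only grows as $5n$, one expects the Sidorenko surplus to dominate any loss created by the non-Sidorenko cycle once $n$ is sufficiently large in terms of $k$; making this intuition precise, and extracting the explicit threshold $n_k$, is the central technical difficulty.
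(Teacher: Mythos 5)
Your proposal is a faithful problem setup --- the graphon reformulation, the edge count $\|H_n\| = 4n^2 + 5n$, and the factorization $t(H_n,W_i) = \int F_i(y)^n G_i(y)^n\,\dd y$ after integrating out first the cycle vertices and then the $A$-side vertices are all correct --- but it stops precisely where the proof must begin. You correctly identify the obstacle, namely that $C_5$ is not Sidorenko, so no naive bound $t(H_n,W_i)\ge t(K_{2n,2n},W_i)\cdot t(C_5,W_i)^n$ is available; however, the subsequent passage, ``applying Jensen decouples the non-Sidorenko cycle factor from the bipartite factor at the cost of an error term that one hopes to absorb into the Sidorenko surplus,'' is an aspiration rather than an argument. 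The crucial missing case is the one where some color $W_i$ is essentially bipartite: then $t_{W_i}^{C_5}(x)\approx 0$, hence $F_i\approx 0$ and $t(H_n,W_i)\approx 0$, so there is no ``Sidorenko surplus'' of color $i$ to work with, and the remaining $k-1$ colors now nearly partition the two dense diagonal blocks, so they are nowhere near $1/k$-constant. The power-mean inequality applied to $\sum_i d_i=1$ gives no usable information in this regime. This is exactly the hard content of the theorem, and your proposal leaves it entirely open.

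The paper's argument closes this gap with a trichotomy plus an induction on $k$ (Theorem~\ref{thm:Knn-common}). First, if $t(K_{2,2},W_i)$ is \emph{close} to $t(K_2,W_i)^4$, then $W_i$ is spectrally close to rank one; a careful eigenvalue expansion shows the rooted $C_5$ density $t_{W_i}^{C_5}(x)=\sum_j\lambda_j^3\beta_j(x)^2$ is dominated by the positive top-eigenvalue contribution, and the single-color bound $t(K_{2n,2n,C_5},W_i)\ge t(K_2,W_i)^{4n^2+5n}$ holds outright (Lemmas~\ref{lm:Kn2-local} and~\ref{lm:Knn-local}). Second, if $t(K_{2,2},W_i)$ is bounded \emph{away} from $t(K_2,W_i)^4$, a degree-peeling argument (Lemma~\ref{lm:Knn-far}) shows that either $W_i$ has a large $p_0$-sparse part ($\alpha_{p_0}(W_i)\ge\delta_0$), or the multiplicative surplus $(1+\varepsilon_0/2)^{n^2}$ coming from the $4n^2$ edges of $K_{2n,2n}$ eventually overwhelms the worst-case per-cycle loss; this is where the threshold $n_k$ is extracted. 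Third, in the sparse-part case one passes to the induced subgraphons $W_j[h]$ of the other $k-1$ colors on that part: these have combined density at least $1-\delta_{k-1}$, which is precisely why the inductive statement is phrased with the relaxed hypothesis $t(K_2,W_1+\cdots+W_k)\ge 1-\delta_k$ rather than the exact decomposition $W_1+\cdots+W_k=1$, and the induction with $k-1$ colors finishes. None of these three mechanisms --- the local spectral lemma, the degree-peeling dichotomy, or the sparse-part inductive reduction --- appears in your outline, and they are what turn the intuition you describe into a proof.
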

Examples of graphs described in the statement of Theorem~\ref{thm:append} can be found in Figure~\ref{fig:append}.
We remark that one of the key ingredients in the proof of Theorem~\ref{thm:append} is establishing that
such graphs are $k$-common in a certain ``local'' sense (see Lemma~\ref{lm:Kn2-local}),
which is proved using spectral arguments.

\begin{figure}
\begin{center}
\epsfbox{kcommon-1.mps}
\end{center}
\caption{Examples of graphs from the statement of Theorem~\ref{thm:append} for $n=1,2,3$.
         The three graphs are denoted by $K_{2,2,C_5}$, $K_{4,4,C_5}$ and $K_{6,6,C_5}$ in Section~\ref{sec:conn}.}
\label{fig:append}
\end{figure}

As we have already mentioned, there is a close connection between $k$-common graphs and Sidorenko graphs.
We say that a graph $H$ is \emph{Sidorenko}
if the number of copies of $H$ in a graph with edge density $d$
is asymptotically minimized by the random graph with edge density $d$.
Sidorenko's Conjecture~\cite{Sid91,Sid93} famously asserts that every bipartite graph $H$ is Sidorenko;
an equivalent conjecture was made earlier by Erd\H{o}s and Simonovits~\cite{ErdS84}.
It is easy to show that every Sidorenko graph is bipartite and $k$-common for every $k\ge 2$.
There are now many families of bipartite graphs that are known to be Sidorenko,
see, e.g.,~\cite{LiS11,ConFS10,Hat10,Sze15,ConKLL18,ConL17,ConL18};
prior to our work, these graphs were the only known examples of $k$-common graphs for any fixed $k\ge 3$.

The following simple construction of~\cite[Theorem~14]{JagST96} shows that, for
every non-bipartite graph $H$, there exists $k\geq2$ such that $H$ is not $k$-common.
Split the vertices of $K_N$ into $2^{k-1}$ sets of roughly equal size, indexed by $0,\dots,2^{k-1}-1$. 
Color the edges between the $i$-th and $j$-th sets with the color corresponding to the first bit on which $i$ and $j$ 
differ in their binary representations and color the edges inside each set with the color $k$.
Since $H$ is non-bipartite,
the only monochromatic copies of $H$ are inside the sets and
thus their number is $(1+o(1))N^{|H|}2^{-(k-1)(|H|-c)}$,
where $c$ is the number of components of $H$.
Thus, if Sidorenko's Conjecture is true,
then Sidorenko graphs are precisely the graphs that are $k$-common for every $k\ge 2$.
We prove this without the assumption that Sidorenko's Conjecture holds.
\begin{theorem}
\label{thm:Sidorenko}
A graph $H$ is $k$-common for all $k\geq2$ if and only if it is Sidorenko.
\end{theorem}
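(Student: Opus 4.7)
My plan is to split the proof into the easy "if" direction and the harder "only if" direction; the latter I reduce, via the Jagger--Štovíček--Thomason construction recounted in the introduction, to the case where $H$ is bipartite.

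For the "if" direction, suppose $H$ is Sidorenko. Viewing a $k$-edge-coloring of $K_N$ as $\{0,1\}$-valued graphons $(W_1,\ldots,W_k)$ with densities $d_1,\ldots,d_k$ summing to $1$, the Sidorenko property applied within each color class yields $t(H,W_i)\geq d_i^{\|H\|}$, and summing while invoking the power-mean inequality for the convex function $x\mapsto x^{\|H\|}$ gives $\sum_i d_i^{\|H\|}\geq k^{1-\|H\|}$, which matches the random-coloring count asymptotically.

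For the "only if" direction, I first note that the binary-splitting construction from the introduction gives, for each non-bipartite $H$ and each $k$, a $k$-coloring with roughly $N^{|H|}2^{-(k-1)(|H|-c)}$ monochromatic copies; since $|H|-c\geq 2$ whenever $H$ contains an odd cycle, this beats $N^{|H|}k^{1-\|H\|}$ for $k$ large enough. So any $H$ that is $k$-common for every $k$ is bipartite, and it remains to show that such an $H$ is Sidorenko. I argue this by contrapositive: assume $H$ is bipartite but not Sidorenko, fix a graphon $U\colon[0,1]^2\to[0,1]$ with $d:=\int U>0$ and $t(H,U)<d^{\|H\|}$, and set $\epsilon:=1-t(H,U)/d^{\|H\|}\in(0,1]$.

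For any integer $k\geq 1/d$, I construct the explicit graphon $k$-coloring
\[W_1:=\frac{U}{kd},\qquad W_i:=\frac{1-W_1}{k-1}\ \text{ for }2\leq i\leq k,\]
whose components are non-negative, sum pointwise to $1$, and each have density $1/k$. A direct computation gives $t(H,W_1)=(kd)^{-\|H\|}t(H,U)=k^{-\|H\|}(1-\epsilon)$, and expanding the product $\prod_{ij\in E(H)}\bigl(1-U(x_i,x_j)/(kd)\bigr)$ term by term yields $t(H,1-U/(kd))=1-\|H\|/k+O(1/k^2)$; combining this with $(k-1)^{1-\|H\|}=k^{1-\|H\|}\bigl(1+(\|H\|-1)/k+O(1/k^2)\bigr)$, one obtains
\[\sum_{i=1}^{k}t(H,W_i)\;=\;k^{1-\|H\|}\,-\,\epsilon\,k^{-\|H\|}\,+\,O\bigl(k^{-\|H\|-1}\bigr),\]
which is strictly less than $k^{1-\|H\|}$ for all sufficiently large $k$. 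A standard quasi-random sampling argument converts this graphon coloring into a genuine $k$-edge-coloring of $K_N$ (for $N$ large) whose number of monochromatic copies of $H$ beats the random count, contradicting $k$-commonness. The main obstacle I foresee is bookkeeping the error terms carefully enough that the leading correction $-\epsilon k^{-\|H\|}$ provably dominates, and verifying that this strict graphon-level inequality transfers to finite $K_N$ via the sampling step.
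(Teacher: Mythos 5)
Your proof is correct, and the ``only if'' direction takes a genuinely different and more elementary route than the paper's. For the ``if'' direction both arguments are identical (Sidorenko on each color plus convexity). For the converse, the paper first approximates the witnessing graphon $W$ by a step graphon with $m$ parts, then replaces diagonal tiles by the average off-diagonal value $\delta$, and finally symmetrizes over the symmetric group by constructing $k=\ell\, m!$ graphons $W_{\sigma,s}$ (indexed by permutations $\sigma\in S_m$ and $s\in[\ell]$) that sum exactly to the constant-$1$ graphon and each satisfy $t(H,W_{\sigma,s})=(k\delta)^{-\|H\|}t(H,W'')<k^{-\|H\|}$. You instead place the entire ``defect'' in a single color $W_1=U/(kd)$ (which has density $1/k$ and $t(H,W_1)=k^{-\|H\|}(1-\epsilon)$) and split the complement evenly among the remaining $k-1$ colors, then verify via Proposition~\ref{prop:epsU} that the perturbative correction from the nearly-constant colors cancels at order $k^{-\|H\|}$, so that the leading deficit $-\epsilon\,k^{-\|H\|}$ survives for $k$ large. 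Your expansion is correct: the $|F|=1$ terms contribute $-\|H\|/k$, which exactly cancels the $(\|H\|-1)/k$ from $(k-1)^{1-\|H\|}$ together with the single factor of $k-1$, leaving $k^{1-\|H\|}-k^{-\|H\|}+O(k^{-\|H\|-1})$ for colors $2,\dots,k$; adding $t(H,W_1)$ restores the $+k^{-\|H\|}$ and leaves the net loss $-\epsilon k^{-\|H\|}$. Two small remarks: (i) your preliminary reduction to bipartite $H$ via the binary-splitting construction is harmless but unnecessary, since your main construction applies to any non-Sidorenko $H$, including non-bipartite ones (which are automatically non-Sidorenko, witnessed e.g.\ by the $K_{1,1}$-blowup graphon); (ii) the final ``sampling back to $K_N$'' step is also unnecessary, since the paper's definition of $k$-common is already phrased at the graphon level (Section~\ref{sec:prelim}), so the graphon-level strict inequality directly certifies that $H$ is not $k$-common. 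Your construction is shorter and more self-contained than the paper's, at the cost of an explicit asymptotic computation rather than a clean algebraic identity.
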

We also establish the variant of Theorem~\ref{thm:Sidorenko} in the local setting,
i.e., when the edge-coloring is ``close'' to the random edge-coloring.
The notion of locally $k$-common graphs is formally defined in Section~\ref{sec:prelim}. Recall that the 
\emph{girth} of a graph is the length of its shortest cycle.
\begin{theorem}
\label{thm:local}
The following holds for every $k\ge 3$: 
if a graph $H$ has odd girth, then $H$ is not locally $k$-common.
\end{theorem}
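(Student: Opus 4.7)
The plan is to show that $H$ is not locally $k$-common by producing, for each small $\epsilon>0$, a graphon $k$-coloring $(W_1^\epsilon,\ldots,W_k^\epsilon)$ that lies within any prescribed neighborhood of the uniform coloring $(1/k,\ldots,1/k)$ and satisfies $\sum_{i=1}^k t(H,W_i^\epsilon) < k^{1-\|H\|}$. Such a perturbation contradicts local $k$-commonness directly in the graphon setting, and passes to the discrete setting of near-uniform $k$-edge-colorings of $K_n$ by standard sampling.

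The perturbation I would use is rank-one in $[0,1]^2$ together with a scalar perturbation pattern across the $k$ colors. Pick $u\in L^\infty([0,1])$ with $\int u=0$ and $\int u^2>0$ (say $u(x)=\sin 2\pi x$), and set $f(x,y):=u(x)u(y)$. Choose scalars $c_1=c_2=1$, $c_3=-2$, and $c_i=0$ for $i\ge 4$; this is valid because $k\ge 3$, and one has $\sum_i c_i=0$. Define $W_i^\epsilon:=\tfrac{1}{k}+\epsilon c_i f$; for $\epsilon>0$ small these take values in $[0,1]$ and sum pointwise to $1$, so they form a legitimate graphon $k$-coloring close to the uniform one.

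The heart of the argument is a Taylor-like expansion in $\epsilon$. Writing $g$ for the (odd) girth of $H$, $m$ for $\|H\|$, and $H_S$ for the spanning subgraph of $H$ with edge set $S\subseteq E(H)$, I would expand the product defining $t(H,W_i^\epsilon)$ to obtain
\[
\sum_{i=1}^k t(H,W_i^\epsilon)-k^{1-m}\;=\;\sum_{\emptyset\ne S\subseteq E(H)} k^{|S|-m}\,\epsilon^{|S|}\left(\sum_{i=1}^k c_i^{|S|}\right) t(H_S,f).
\]
The rank-one form of $f$ gives $t(H_S,f)=\prod_{v\in V(H)}\int u^{d_v(S)}$, which, because $\int u=0$, vanishes whenever $H_S$ has a vertex of degree exactly one. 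For $1\le|S|<g$ the subgraph $H_S$ is a nonempty forest and therefore has a leaf, so all these terms vanish. For $|S|=g$ the only non-vanishing $S$ are those forming a shortest cycle of $H$, each contributing $t(H_S,f)=(\int u^2)^g>0$; the scalar factor $\sum_i c_i^g=2+(-2)^g=2-2^g$ is strictly negative since $g\ge 3$ is odd. Hence the leading $\epsilon^g$-coefficient is strictly negative, higher-order terms are $O(\epsilon^{g+1})$, and the desired strict inequality holds for all sufficiently small $\epsilon$.

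The main obstacle is conceptual rather than technical: identifying the right perturbation. The two crucial ingredients are the rank-one shape of $f$, which turns $t(H_S,f)$ into a leaf-detector and thus forces the first nontrivial contribution to appear at order equal to the girth, and the use of at least three nonzero $c_i$ with $\sum_i c_i=0$, which is what allows $\sum_i c_i^g$ to be negative when $g$ is odd; both are unavailable when $k=2$, consistent with the fact that odd cycles are $2$-common. The remaining bookkeeping --- expansion, vanishing of sub-girth terms, and bounding the higher-order remainder --- is routine, as is the translation to the discrete formulation of ``locally $k$-common'' of Section~\ref{sec:prelim} via random sampling from the graphon.
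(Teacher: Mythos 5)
Your proposal is correct and takes essentially the same approach as the paper: the same unbalanced perturbation scalars $(c_1,c_2,c_3,\dots)=(1,1,-2,0,\dots,0)$, the same expansion via Proposition~\ref{prop:epsU}, and the same key observation that the girth cycles give the first surviving term of order $\varepsilon^{\ell}$, with $\sum_i c_i^\ell = 2-2^\ell<0$ for odd $\ell\ge 3$. The only difference is the choice of kernel: you use the cleaner rank-one $f(x,y)=u(x)u(y)$ with $\int u=0$, while the paper builds a block-structured $\pm1$ kernel on $2\ell$ parts, but both play exactly the same role (zero row sums annihilate $t(G,\cdot)$ whenever $G$ has a leaf, and $t(C_\ell,\cdot)>0$), so this is a simplification rather than a different route.
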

Since a theorem of Fox and the last author~\cite{FoxW17} asserts that
all forests and graphs of even girth are locally Sidorenko,
Theorem~\ref{thm:local} implies for every $k\ge 3$ that
a graph $H$ is locally $k$-common if and only if $H$ is locally Sidorenko.
We remark that Theorem~\ref{thm:local} strengthens the result of Cummings and Young~\cite{CumY11} that
no graph containing a triangle is $3$-common by showing that such graphs are not even locally $3$-common.

\section{Preliminaries}
\label{sec:prelim}

In this section, we fix the notation used throughout the paper and present basic properties of $k$-common graphs.
We also introduce some of the terminology of the theory of graph limits.
While all our arguments can be presented for finite graphs,
the language of graph limits allows us not to discuss ``small order'' asymptotic terms.
Our notation and terminology mainly follows that of the monograph of Lov\'{a}sz~\cite{Lov12},
and we refer the reader to~\cite{Lov12} for a more thorough introduction.

We write $\NN$ for the set of all positive integers and
$[k]$ for the set of the first $k$ positive integers, i.e., $[k]=\{1,\ldots,k\}$.
We work with the Borel measures on $\RR^d$ throughout the paper and
if $A\subseteq [0,1]^d$ is a measurable subset of $\RR^d$, we write $|A|$ for its measure.
Graphs that we consider in this paper are finite and simple.
If $G$ is a graph,
then its vertex set is denoted by $V(G)$ and its edge set by $E(G)$;
the cardinalities of $V(G)$ and $E(G)$ are denoted by $|G|$ and $\|G\|$, respectively.
A \emph{homomorphism} from a graph $H$ to a graph $G$
is a function $f:V(H)\to V(G)$ such that $f(u)f(v)\in E(G)$ whenever $uv\in E(H)$.
The \emph{homomorphism density} of $H$ in $G$
is the probability that a random function from $V(H)$ to $V(G)$ is a homomorphism,
i.e., it is the number of homomorphisms from $H$ to $G$ divided by $|G|^{|H|}$.
We denote the homomorphism density of $H$ in $G$ by $t(H,G)$.

A \emph{graphon} is a measurable function $W:[0,1]^2\to [0,1]$ that is symmetric,
i.e., $W(x,y)=W(y,x)$ for all $(x,y)\in [0,1]^2$.
Intuitively, a graphon can be thought of as a continuous variant of the adjacency matrix of a graph.
The graphon that is equal to $p\in [0,1]$ everywhere is called the $p$-constant graphon;
when there will be no confusion, we will just use $p$ to denote such a graphon.
A graphon $W$ is a \emph{step graphon}
if there exist a partition of $[0,1]$ into non-null subsets $A_1,\ldots,A_m$ such that
$W$ is constant on each of the sets $A_i\times A_j$, $i,j\in [m]$.
The sets $A_i$, $i\in [m]$, are called \emph{parts} of the step graphon $W$;
the sets $A_i\times A_j$, $i,j\in [m]$, are \emph{tiles} and
those with $i=j$ are \emph{diagonal tiles}.

The notion of \emph{homomorphism density} extends to graphons by setting
\begin{equation}
t(H,W) := \int_{[0,1]^{V(H)}} \prod_{uv\in E(H)} W(x_u,x_v) \dd x_{V(H)}\label{eq:tHW}
\end{equation}
for a graph $H$ and graphon $W$.
We define the \emph{density} of a graphon $W$ to be $t(K_2,W)$.
The quantity $t(H,W)$ has a natural interpretation in terms of sampling a random graph according to $W$:
for an integer $n$, choose $n$ independent uniform random points $x_1,\ldots,x_n$ from the interval $[0,1]$ and
create a graph with the vertex set $[n]$ by joining the vertices $i$ and $j$ with probability $W(x_i,x_j)$.
The graph constructed in this way is called a \emph{$W$-random graph} and denoted by $G_{n,W}$. 
It can be shown that the following holds for every graph $H$ with probability one:
\[\lim_{n\to\infty}t(H,G_{n,W})=t(H,W).\]
A sequence $(G_i)_{i\in\NN}$ of graphs is \emph{convergent}
if the sequence $(t(H,G_i))_{i\in\NN}$ converges for every graph $H$.
A simple diagonalization argument implies that every sequence of graphs has a convergent subsequence.
We say that a graphon $W$ is a \emph{limit} of a convergent sequence $(G_i)_{i\in\NN}$ of graphs if
\[\lim_{i\to\infty}t(H,G_i)=t(H,W)\]
for every graph $H$.
One of the crucial results in graph limits, due to Lov\'asz and Szegedy~\cite{LovS06}, is that
every convergent sequence of graphs has a limit.
Hence, a graph $H$ is Sidorenko if and only if $t(H,W)\ge t(K_2,W)^{\|H\|}$ for every graphon $W$.
Similarly, the property of being $k$-common translates to the language of graph limits as follows.
A graph $H$ is $k$-common if 
\[t(H,W_1)+\cdots+t(H,W_k)\ge \frac{1}{k^{\|H\|-1}}\]
for any graphons $W_1,\ldots,W_k$ such that $W_1+\cdots+W_k=1$.

We pause the exposition of graph limit theory to demonstrate
how the just introduced notions are convenient for establishing some basic properties of $k$-common graphs.
Jagger, \v{S}\v{t}ov\'{\i}\v{c}ek and Thomason~\cite[Theorem~13]{JagST96} observed that
if $H$ is not $k$-common, then $H$ is not $\ell$-common for any $\ell\geq k$.
We now present their argument in the language of graph limits.
Suppose that $H$ is not $k$-common,
i.e., there exists graphons $W_1,\ldots,W_k$ such that $W_1+\cdots+W_k=1$ and
$t(H,W_1)+\cdots+t(H,W_k) < k^{-\|H\|+1}$.
Consider an integer $\ell>k$.
We set $W'_i=\frac{k}{\ell}W_i$ for $i\in [k]$ and $W'_i=1/\ell$ for $i\in [\ell]\setminus [k]$.
Observe that
\begin{align*}
t(H,W'_1)+\cdots+t(H,W'_{\ell}) &= \left(\frac{k}{\ell}\right)^{\|H\|}\left(t(H,W_1)+\cdots+t(H,W_k)\right)+\frac{\ell-k}{\ell^{\|H\|}}\\
                              &< \frac{k}{\ell^{\|H\|}}+\frac{\ell-k}{\ell^{\|H\|}}=\ell^{-\|H\|+1},
\end{align*}
which implies that $H$ is not $\ell$-common.
Hence, we can define $\kappa(H)$ to be the smallest integer $k$ such that $H$ is not $k$-common;
if no such integer exists, we set $\kappa(H)=\infty$.
That is, $H$ is $k$-common if and only if $2\leq k<\kappa(H)$.
In particular, Theorem~\ref{thm:Sidorenko} asserts that $H$ is Sidorenko if and only if $\kappa(H)=\infty$.

In Section~\ref{sec:intro},
for any non-bipartite connected graph $H$,
we exhibited a $k$-edge-coloring of $K_N$ from~\cite{JagST96} which
has $(1+o(1))N^{|H|}2^{-(k-1)(|H|-1)}$ monochromatic copies of $H$.
It follows that a non-bipartite connected graph $H$ is not $k$-common for any $k$
that satisfies $2^{-(k-1)(|H|-1)}<k^{-\|H\|+1}$.
This implies that
$\kappa(H)\le \lceil 2 d\log_2 d\rceil$
for any non-bipartite connected graph $H$ with average degree $d$.
We remark that for graphs $H$ with chromatic number larger than three,
a better upper bound on $\kappa(H)$ can be obtained by considering the edge-coloring obtained
by splitting vertices of $K_N$ to $(\chi(H)-1)^{k-1}$ roughly equal parts and
defining the edge-coloring based on the base $(\chi(H)-1)$ representations of the indices of the parts.

Let us return to our brief introduction to notions from the theory of graph limits that we use in this paper.
A graphon $W$ can be thought of as an operator on $L_2[0,1]$ where the image of a function $f\in L_2[0,1]$ is
given by
\[\int_{0}^1W(x,y)f(y)\dd y.\]
Every such operator is compact and so
its spectrum $\sigma(W)$ is either finite or countably infinite,
the only accumulation point of $\sigma(W)$ can be zero and
every non-zero element of $\sigma(W)$ is an eigenvalue of $W$~\cite[Section~7.5]{Lov12}.
In addition, all elements of $\sigma(W)$ are real and the largest is at least the density of $W$.
We define $\wsigma(W)$ to be the multiset containing all non-zero elements $\lambda$ of $\sigma(W)$,
with multiplicity equal to the dimension of the kernel of $(W-\lambda)$, which is finite.
In the graph case, the trace of the $n$-th power of the adjacency matrix of a graph $G$,
which is equal to the sum of the $n$-th powers of the eigenvalues of the matrix,
is the number of homomorphisms from $C_n$ to $G$,
i.e., it is equal to $t(C_n,G)|G|^n$~\cite[Equation~(5.31)]{Lov12}.
We will need the analogous statement for graphons,
which we now state as a proposition.
\begin{proposition}[{Lov\'asz~\cite[Equation~(7.22)]{Lov12}}]
\label{prop:spectrum}
Let $W$ be a graphon. It holds for every $n\ge 3$:
\[t(C_n,W)=\sum_{\lambda\in\wsigma(W)}\lambda^n.\]
\end{proposition}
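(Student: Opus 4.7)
The plan is to realize the graphon $W$ as a compact self-adjoint integral operator on $L_2[0,1]$ and to reinterpret $t(C_n,W)$ as the trace of its $n$-th power. Let $T_W\colon L_2[0,1]\to L_2[0,1]$ be the operator given by $(T_Wf)(x)=\int_0^1 W(x,y)f(y)\dd y$. Unwinding~\eqref{eq:tHW} gives
\[t(C_n,W)=\int_{[0,1]^n}W(x_1,x_2)W(x_2,x_3)\cdots W(x_{n-1},x_n)W(x_n,x_1)\dd x_1\cdots\dd x_n.\]
Writing $W^{(n)}$ for the kernel of $T_W^n$ obtained by composing $W$ with itself $n-1$ times, the right-hand side equals $\int_0^1 W^{(n)}(x,x)\dd x$, which is the trace of $T_W^n$.

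Next, I would invoke the spectral theorem for compact self-adjoint operators on $L_2[0,1]$. This furnishes an orthonormal system $\{\phi_i\}$ of eigenfunctions of $T_W$ with real eigenvalues $\lambda_i$; by the description of $\wsigma(W)$ as the multiset of non-zero eigenvalues counted with geometric multiplicity, the non-zero $\lambda_i$ are exactly the elements of $\wsigma(W)$. Since $W\in L_2([0,1]^2)$, the operator $T_W$ is Hilbert--Schmidt, and one has the Mercer-type expansion
\[W(x,y)=\sum_i\lambda_i\,\phi_i(x)\phi_i(y)\]
converging in $L_2([0,1]^2)$, with $\sum_i\lambda_i^2=\|W\|_2^2<\infty$.

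Substituting this expansion into the cyclic integral and using the orthonormality relation $\int_0^1\phi_j(x)\phi_k(x)\dd x=\delta_{jk}$, every cross term vanishes: the integration over $x_2,\ldots,x_n$ forces all indices to coincide, leaving $\sum_i\lambda_i^n$. Since the zero eigenvalues contribute nothing, this sum equals $\sum_{\lambda\in\wsigma(W)}\lambda^n$, which is the desired identity.

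The one point that requires care is justifying the interchange of summation and integration when substituting the Mercer expansion, i.e.\ confirming that $T_W^n$ is trace-class. This is where the hypothesis $n\ge 3$ is used: boundedness of $|\lambda_i|$ and Hilbert--Schmidt summability combine to give
\[\sum_i|\lambda_i|^n\le\Bigl(\max_i|\lambda_i|\Bigr)^{n-2}\sum_i\lambda_i^2<\infty,\]
so the rearrangement is absolutely legitimate. I expect this summability estimate, together with the careful identification of $\wsigma(W)$ with the sequence of non-zero eigenvalues, to be the only non-routine aspect of the argument.
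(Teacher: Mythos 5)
Your argument is correct and is essentially the standard proof; the paper itself does not prove Proposition~\ref{prop:spectrum} but simply cites Lov\'asz~\cite[Equation~(7.22)]{Lov12}, and your spectral/trace computation is the argument one finds there.

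Two small points of precision are worth noting. First, your summability bound
$\sum_i|\lambda_i|^n\le\bigl(\max_i|\lambda_i|\bigr)^{n-2}\sum_i\lambda_i^2$
already works for $n\ge 2$, so $n\ge 3$ is not really ``where the hypothesis is used'' in the analysis; the reason the proposition restricts to $n\ge 3$ is simply that $C_1$ and $C_2$ are not simple graphs, so $t(C_n,W)$ in the sense of~\eqref{eq:tHW} only makes sense for $n\ge 3$. Second, the phrase ``the rearrangement is absolutely legitimate'' glosses over two genuine subtleties: (a) substituting the $L_2$-convergent expansion $W=\sum_i\lambda_i\,\phi_i\otimes\phi_i$ into a product of $n$ factors and interchanging sum and integral is not a direct consequence of $\sum_i|\lambda_i|^n<\infty$; and (b) the identity $\mathrm{tr}(T_W^n)=\int_0^1 W^{(n)}(x,x)\,\dd x$ requires care because $W^{(n)}$ is only defined almost everywhere and the diagonal is a null set. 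Both issues are avoided by the following slightly different bookkeeping: split the cycle at two vertices into paths of lengths $m$ and $n-m$ with $1\le m\le n-1$, so that
\[
t(C_n,W)=\int_{[0,1]^2}W^{(m)}(x,y)\,W^{(n-m)}(x,y)\,\dd x\,\dd y
        =\bigl\langle W^{(m)},\,W^{(n-m)}\bigr\rangle_{L_2([0,1]^2)}.
\]
Since each $T_W^k$ with $k\ge 1$ is Hilbert--Schmidt with kernel $W^{(k)}=\sum_i\lambda_i^k\,\phi_i\otimes\phi_i$ converging in $L_2$, and the functions $\phi_i\otimes\phi_i$ form an orthonormal system in $L_2([0,1]^2)$, Parseval's identity gives $\langle W^{(m)},W^{(n-m)}\rangle=\sum_i\lambda_i^m\lambda_i^{n-m}=\sum_i\lambda_i^n$ with no further interchange to justify. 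I would recommend this route, or at least flag (a) and (b) explicitly, but the overall structure and the key estimate in your write-up are the right ones.
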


There are several useful metrics on graphons. One of the most important
from the perspective of graph limit theory is the metric induced by the
cut norm. A \emph{kernel} is a bounded symmetric measurable function from 
$[0,1]^2$ to $\mathbb{R}$; a kernel can be thought of as a continuous 
variant of the adjacency matrix of an edge-weighted graph. We define the 
\emph{cut norm} of a kernel $U$ to be
\[\cut{U} := \sup_{S,T\subseteq [0,1]}\left|\int_{S\times T}U(x,y)\dd x\dd y\right|,\]
where the supremum is over all measurable subsets $S$ and $T$ of $[0,1]$. 
The \emph{cut distance} of graphons $W$ and $W'$, denoted by $\dcut{W}{W'}$,
is the infimum of the cut norm $\cut{W^\varphi-W'}$
taken over all measure preserving maps $\varphi:[0,1]\to [0,1]$
where $W^\varphi(x,y)=W(\varphi(x),\varphi(y))$.
If two graphons have small cut distance, then their
homomorphism densities do not differ substantially, as the next lemma shows.
\begin{lemma}[{Lov\'asz~\cite[Lemma 10.23]{Lov12}}]
\label{lm:cutdist}
Let $W$ and $W'$ be two graphons and $H$ a graph.
It holds that $|t(H,W)-t(H,W')|\le \|H\|\cdot \dcut{W}{W'}$.
\end{lemma}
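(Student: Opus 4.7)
The plan is to reduce to the cut-\emph{norm} version of the bound, namely to show $|t(H,W)-t(H,W')|\le\|H\|\cdot\cut{W-W'}$ for any two graphons $W$ and $W'$, from which the cut-distance statement follows at once. Indeed, $t(H,\cdot)$ is invariant under composition with any measure-preserving bijection $\varphi:[0,1]\to[0,1]$, so $t(H,W)=t(H,W^{\varphi})$; taking the infimum over $\varphi$ of $\cut{W^{\varphi}-W'}$ on the right-hand side of the cut-norm bound then yields $\|H\|\cdot\dcut{W}{W'}$.

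For the cut-norm bound I would run the standard edge-by-edge telescoping. Fix an enumeration $e_1,\dots,e_m$ of the $m=\|H\|$ edges of $H$ and set
\[t_i \;:=\; \int_{[0,1]^{V(H)}} \prod_{j\le i} W'(x_{e_j})\prod_{j>i} W(x_{e_j})\dd x,\]
so that $t_0=t(H,W)$ and $t_m=t(H,W')$. Writing the difference as the telescope $\sum_{i=1}^{m}(t_{i-1}-t_i)$, it suffices to bound each increment by $\cut{W-W'}$. For the $i$-th increment, write $e_i=uv$ and fix all variables except $x_u,x_v$. Because $H$ is simple, $e_i$ is the \emph{only} edge containing both $u$ and $v$, so every other edge incident to $u$ (respectively $v$) contributes a factor depending only on $x_u$ (respectively $x_v$) after fixing, while edges disjoint from $\{u,v\}$ contribute a constant. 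The inner integral over $x_u,x_v$ therefore takes the form
\[C\cdot\int_{[0,1]^2}(W-W')(x_u,x_v)\,A(x_u)\,B(x_v)\dd x_u\dd x_v\]
with $A,B,C\in[0,1]$ depending on the fixed variables.

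The key estimate to finish is that for any kernel $U$ and any measurable $f,g:[0,1]\to[0,1]$,
\[\left|\int U(x,y)f(x)g(y)\dd x\dd y\right| \;\le\; \cut{U};\]
this follows by writing $f(x)=\int_{0}^{1}\mathbf{1}[f(x)\ge s]\dd s$ and similarly for $g$, which expresses $\int U\cdot fg$ as an average over indicators of rectangles $S_s\times T_t$, each contributing at most $\cut{U}$ by the very definition of the cut norm. Applied with $U=W-W'$, this bounds the inner integral by $\cut{W-W'}$, and integrating out the remaining variables against the nonnegative factor $C\le 1$ preserves the bound, so $|t_{i-1}-t_i|\le\cut{W-W'}$. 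Summing the telescope supplies the factor $m=\|H\|$. I expect the main obstacle to be not a conceptual one but a careful bookkeeping step: identifying which factors collapse into $A(x_u)$, $B(x_v)$, and the constant $C$, which is exactly where simplicity of $H$ is used so that $e_i$ is the unique edge through both endpoints $u$ and $v$.
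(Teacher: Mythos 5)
Your proof is correct and is essentially the standard argument for the counting lemma; the paper itself does not prove this lemma but cites it as Lemma~10.23 of Lov\'asz's monograph, where the same edge-by-edge telescoping reduction to the bilinear estimate $\bigl|\int U(x,y)f(x)g(y)\dd x\dd y\bigr|\le\cut{U}$ (via the layer-cake averaging over rectangles) is used. The only cosmetic discrepancy is that the paper's definition of $\dcut{\cdot}{\cdot}$ takes the infimum over measure-preserving \emph{maps} rather than bijections, but $t(H,W^\varphi)=t(H,W)$ holds for any measure-preserving $\varphi$, so your passage from the cut-norm bound to the cut-distance bound goes through unchanged.
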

Lemma~\ref{lm:cutdist} asserts that two graphons which are close in the cut distance 
have similar homomorphism densities. The next lemma allows us to find a step graphon 
of bounded complexity that is close in cut distance to any graphon.
\begin{lemma}[Frieze and Kannan~\cite{FriK99}; see also~{\cite[Lemma~9.3]{Lov12}}]
\label{lm:reg}
For every $\varepsilon>0$, there exists an integer $M\in\NN$ such that for every graphon $W$,
there exists a step graphon $W'$ with at most $M$ parts, all of equal sizes, such that
the densities of $W$ and $W'$ are the same and $\dcut{W}{W'}\le\varepsilon$.
\end{lemma}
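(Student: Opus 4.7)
The plan is to run the classical \emph{energy increment} argument of Frieze and Kannan and then pass from the resulting partition to one with equal-sized parts at only a small further cost in cut distance.

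For a finite measurable partition $\mathcal{P}=\{A_1,\ldots,A_m\}$ of $[0,1]$, let $W_{\mathcal{P}}$ denote the step graphon obtained by averaging $W$ on each tile $A_i\times A_j$, and define its \emph{energy}
$$\mathcal{E}(\mathcal{P})=\int_{[0,1]^2}W_{\mathcal{P}}^2.$$
Two facts drive the argument: first, $\mathcal{E}(\mathcal{P})\in[0,1]$ and $\mathcal{E}$ is monotone under refinement; second, if there exist measurable $S,T\subseteq[0,1]$ with $\bigl|\int_{S\times T}(W-W_{\mathcal{P}})\bigr|>\varepsilon/2$, then refining $\mathcal{P}$ by intersecting with both $\{S,[0,1]\setminus S\}$ and $\{T,[0,1]\setminus T\}$ yields $\mathcal{P}'$ with $\mathcal{E}(\mathcal{P}')\ge\mathcal{E}(\mathcal{P})+\varepsilon^2/4$, which is a brief Cauchy--Schwarz computation. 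Starting from the trivial partition $\{[0,1]\}$ and iterating must therefore halt after at most $\lceil 4/\varepsilon^2\rceil$ refinements, producing a partition $\mathcal{P}_0$ with $m_0\le 4^{\lceil 4/\varepsilon^2\rceil}$ parts satisfying $\cut{W-W_{\mathcal{P}_0}}\le\varepsilon/2$; since $W_{\mathcal{P}_0}$ averages $W$ tile-wise, $\int W_{\mathcal{P}_0}=\int W$.

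To enforce equal sizes, I would first apply a measure preserving rearrangement of $[0,1]$ so that every part of $\mathcal{P}_0$ becomes an interval, which changes neither the cut distance to $W$ nor the density. Fix an integer $M\ge 16m_0/\varepsilon$, let $\mathcal{Q}=\{B_1,\ldots,B_M\}$ be the partition of $[0,1]$ into $M$ equal intervals, and let $E\subseteq[0,1]$ be the union of those $B_i$ that meet a boundary point between consecutive parts of $\mathcal{P}_0$; then $|E|\le m_0/M$. Define $W'$ to be the step graphon on $\mathcal{Q}$ obtained by averaging $W_{\mathcal{P}_0}$ (not $W$ itself) over each tile $B_i\times B_j$. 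Then $W'$ has $M$ equal parts, $\int W'=\int W_{\mathcal{P}_0}=\int W$ so densities match, and crucially $W'=W_{\mathcal{P}_0}$ on every tile $B_i\times B_j$ with $B_i,B_j\notin E$ because $W_{\mathcal{P}_0}$ is already constant on such a tile. Hence $W'-W_{\mathcal{P}_0}$ is supported on $(E\times[0,1])\cup([0,1]\times E)$, a set of measure at most $2m_0/M\le\varepsilon/8$, and as both graphons take values in $[0,1]$ this gives $\cut{W'-W_{\mathcal{P}_0}}\le\varepsilon/8$. The triangle inequality then yields $\dcut{W}{W'}\le\cut{W-W_{\mathcal{P}_0}}+\cut{W_{\mathcal{P}_0}-W'}\le\varepsilon$.

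The main technical point I expect is the equal-sized parts requirement: the bare energy increment argument produces parts of wildly uneven measures, and passing to equal sizes while preserving the density \emph{exactly} is what forces the second averaging step above, applied to $W_{\mathcal{P}_0}$ rather than to $W$. The energy increment itself is standard, and the bound on $m_0$, and hence on $M$, depends only on $\varepsilon$ and not on the input graphon $W$, which is what makes the constant $M$ work uniformly in $W$ as required by the lemma.
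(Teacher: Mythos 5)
Your proof is correct and follows the standard route: the Frieze--Kannan energy-increment argument to reach a bounded partition $\mathcal{P}_0$ with $\cut{W-W_{\mathcal{P}_0}}\le\varepsilon/2$, then a rearrangement-plus-fine-equipartition step in which the boundary-straddling intervals contribute $O(m_0/M)$ to the cut norm and averaging $W_{\mathcal{P}_0}$ over the new tiles preserves the density exactly. This is essentially the argument behind the result the paper cites (Frieze--Kannan; Lov\'asz, Lemma~9.3), which the paper does not reprove, so there is nothing further to compare.
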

The homomorphism density function extends naturally to kernels $U$ by setting $t(H,U)$ 
to be the integral in~\eqref{eq:tHW} with $W$ replaced by $U$.
A graphon $W$ that is close to the $p$-constant graphon
can be expressed as $p+\varepsilon U$ for some kernel $U$ and small $\varepsilon>0$.
The following proposition provides a useful expansion of $t(H,p+\varepsilon U)$,
which implicitly appeared in~\cite{Lov11,Sid89};
we use the formulation from~\cite[proof of Proposition 16.27]{Lov12}.
\begin{proposition}
\label{prop:epsU}
Let $U$ be a kernel, $H$ a graph and $p\in [0,1]$.
It holds that
\[t(H,p+\varepsilon U)=\sum_{F\subseteq E(H)}t(H[F],U)p^{\|H\|-|F|}\varepsilon^{|F|}\]
where $H[F]$ is the spanning subgraph of $H$ with the edge set $F$.
\end{proposition}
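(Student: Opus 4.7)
The plan is to expand the product in the integral definition of $t(H,p+\varepsilon U)$ and regroup terms; this is essentially a multivariable binomial expansion. First I would extend the defining integral~\eqref{eq:tHW} to kernels and write
\[
t(H,p+\varepsilon U) = \int_{[0,1]^{V(H)}} \prod_{uv\in E(H)} \bigl(p+\varepsilon U(x_u,x_v)\bigr)\dd x_{V(H)}.
\]
I would then distribute the product over the two-term sum in each factor, producing a sum indexed by subsets $F\subseteq E(H)$: for $uv\in F$ we pick up the contribution $\varepsilon U(x_u,x_v)$, while for $uv\notin F$ we pick up the contribution $p$. Pulling the scalars out of each summand gives
\[
\prod_{uv\in E(H)}\bigl(p+\varepsilon U(x_u,x_v)\bigr)=\sum_{F\subseteq E(H)}\varepsilon^{|F|} p^{\|H\|-|F|}\prod_{uv\in F} U(x_u,x_v).
\]

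Next I would integrate both sides over $[0,1]^{V(H)}$ and interchange the (finite) sum with the integral, which is immediate since $U$ is bounded and hence each factor, as well as the original product, is bounded. This yields
\[
t(H,p+\varepsilon U) = \sum_{F\subseteq E(H)} \varepsilon^{|F|} p^{\|H\|-|F|} \int_{[0,1]^{V(H)}} \prod_{uv\in F} U(x_u,x_v)\dd x_{V(H)}.
\]
The remaining step is to identify the inner integral with $t(H[F],U)$. By the kernel version of~\eqref{eq:tHW}, $t(H[F],U)$ is the integral of $\prod_{uv\in F}U(x_u,x_v)$ over $[0,1]^{V(H[F])}$; since $H[F]$ is the spanning subgraph of $H$ with edge set $F$, we have $V(H[F])=V(H)$, and for any vertex $v$ isolated in $H[F]$ the integrand does not depend on $x_v$, so integrating $x_v$ over $[0,1]$ contributes a factor of~$1$. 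Hence the two integrals agree, and the claim follows.

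There is no real obstacle in this argument: the proposition is a bookkeeping identity, and the only analytic point to verify is the interchange of a finite sum with an integral, which is trivial under the boundedness of $U$. The main thing to be careful about is to state explicitly that $H[F]$ is taken as a spanning subgraph (so that isolated vertices contribute trivial factors), which ensures the exponents in $\varepsilon$ and $p$ agree with $|F|$ and $\|H\|-|F|$ respectively.
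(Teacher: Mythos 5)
Your argument is correct and complete: it is the standard multilinear expansion of the product over edges, followed by an interchange of the finite sum with the integral and the observation that isolated vertices in the spanning subgraph $H[F]$ contribute a trivial factor of~$1$. The paper itself does not prove this proposition but cites it from \cite[proof of Proposition 16.27]{Lov12} (where essentially this same computation appears), so your write-up simply makes explicit the argument the paper delegates to the reference.
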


A local variant Sidorenko's Conjecture was considered in~\cite{Lov11} and in~\cite[Chapter 16]{Lov12}.
Here, we consider a stronger notion discussed in~\cite{FoxW17}:
a graph $H$ is \emph{locally Sidorenko} if there exists $\varepsilon_0>0$ such that
for every graphon $W$ with density $p$ such that $\cut{W-p}\le\varepsilon_0p$ and $\|W-p\|_{\infty}\le p$,
it holds that $t(H,W)\ge p^{\|H\|}$. The following theorem characterized locally
Sidorenko graphs. 
\begin{theorem}[Fox and Wei~\cite{FoxW17}]
\label{thm:localSid}
A graph $H$ is locally Sidorenko if and only if $H$ is forest or its girth is even.
\end{theorem}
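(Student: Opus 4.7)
\emph{Necessity (odd girth $\Rightarrow$ not locally Sidorenko).} I will construct a small rank-one perturbation violating the local inequality. Fix $p\in(0,1)$, $\varepsilon_0>0$, and take any $\phi:[0,1]\to\{-1,+1\}$ with $\int_0^1\phi=0$. Setting $W(x,y):=p-\varepsilon\,\phi(x)\phi(y)$ for $\varepsilon\in(0,\varepsilon_0 p]$ (small enough that $W\in[0,1]$), one has that $W$ has density $p$, $\|W-p\|_\infty=\varepsilon\le p$, and $\|W-p\|_\square\le\varepsilon$. Applying Proposition~\ref{prop:epsU} to the kernel $-\varepsilon\,\phi\otimes\phi$ and using that $\int_0^1\phi^d$ equals $1$ if $d$ is even and $0$ if $d$ is odd, the expansion collapses to
\[t(H,W)=p^{\|H\|}\sum_{F\in\mathcal{C}(H)}(-\varepsilon/p)^{|F|},\]
where $\mathcal{C}(H)$ is the cycle space of $H$, i.e., the edge subsets $F$ in which every vertex of the spanning subgraph $H[F]$ has even degree. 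The smallest nonempty element of $\mathcal{C}(H)$ is a shortest cycle, of length equal to the girth $g$; since $g$ is odd, the leading correction is $-N_g(\varepsilon/p)^g<0$ with $N_g\ge 1$ the number of $g$-cycles in $H$, and for small enough $\varepsilon$ this dominates all higher-order terms, yielding $t(H,W)<p^{\|H\|}$.

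\emph{Sufficiency.} Forests are Sidorenko (classical, via iterated Cauchy--Schwarz) and hence trivially locally Sidorenko. The substantive case is a graph $H$ of even girth $g\ge 4$. Writing $W=p+U$ with $\int U=0$, $\|U\|_\infty\le p$, and $\|U\|_\square\le\varepsilon_0 p$, Proposition~\ref{prop:epsU} yields
\[t(H,W)-p^{\|H\|}=\sum_{\emptyset\ne F\subseteq E(H)}t(H[F],U)\,p^{\|H\|-|F|}.\]
The $|F|=1$ terms vanish since $\int U=0$; the $|F|=2$ terms are non-negative, since $t(P_3,U)=\|U\mathbf{1}\|_2^2\ge 0$ on cherries and $t(2K_2,U)=0$ on matchings. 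My plan is to show that the total $|F|\ge 3$ contribution is controlled in absolute value by this non-negative second-order contribution plus a quantity that tends to $0$ as $\varepsilon_0\to 0$, which suffices.

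\emph{Main obstacle.} Individual forest contributions at intermediate orders $3\le|F|<g$ can carry either sign (for instance $t(K_{1,3},U)=\int(U\mathbf{1})^3$), so term-by-term positivity is hopeless. The proposed remedy is to decompose $U=U_1+U_2$ with $U_1(x,y):=(U\mathbf{1})(x)+(U\mathbf{1})(y)$ and $U_2:=U-U_1$; a direct check gives $U_2\mathbf{1}\equiv 0$. When $t(H,p+U)$ is expanded jointly in $U_1$ and $U_2$, any contribution carrying a $U_2$-labelled edge at a degree-one vertex integrates to $0$, so $U_2$-only contributions are supported on edge sets whose underlying subgraph has minimum degree at least $2$; the smallest nonempty one is a $g$-cycle. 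The leading $U_2$-only term, at order $g$, is $t(C_g,U_2)=\sum_{\lambda\in\wsigma(U_2)}\lambda^g$ by Proposition~\ref{prop:spectrum}, which is non-negative precisely because $g$ is even---this is the critical use of the even-girth hypothesis. The $U_1$-only contributions are polynomials in $U\mathbf{1}$ bounded by constants times $\|U\mathbf{1}\|_2^2\cdot\|U\|_\infty^{|F|-2}$, and the mixed $U_1/U_2$ terms are estimated via the Counting Lemma (Lemma~\ref{lm:cutdist}) together with $\|U\|_\square\le\varepsilon_0 p$. The delicate step, and the principal technical obstacle, is the careful bookkeeping of the mixed contributions at intermediate orders $|F|<g$, for which the absence of short odd cycles in $H$ is essential to rule out bad low-order terms.
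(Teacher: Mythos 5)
The paper does not prove Theorem~\ref{thm:localSid} but quotes it from Fox and Wei~\cite{FoxW17}, so there is no internal proof to compare against; I assess the attempt directly.

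Your necessity direction is correct and complete. The rank-one perturbation $W = p - \varepsilon\phi(x)\phi(y)$ satisfies $\|W-p\|_\infty = \varepsilon\le p$ and $\cut{W-p}\le\varepsilon\le\varepsilon_0 p$; the application of Proposition~\ref{prop:epsU} together with the moment computation ($\int\phi^d$ equals $1$ if $d$ is even and $0$ if $d$ is odd) correctly reduces the expansion to a sum over even subgraphs; the smallest nonempty even subgraph is a shortest cycle of (odd) length $g$; and the resulting negative leading term $-N_g p^{\|H\|}(\varepsilon/p)^g$ dominates the $O(\varepsilon^{g+1})$ tail for small $\varepsilon$. This half is a clean argument.

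The sufficiency direction, however, contains a genuine gap that you yourself flag. Writing $d_U(x):=\int_0^1 U(x,y)\,dy$, the split $U = U_1 + U_2$ with $U_1(x,y) = d_U(x)+d_U(y)$ and $U_2 = U - U_1$ (so that $d_{U_2}\equiv 0$) is a sensible reduction, and the observation that $t(C_g, U_2)=\sum_{\lambda\in\wsigma(U_2)}\lambda^g\ge 0$ when $g$ is even is indeed where the hypothesis enters. But the crux of the theorem --- showing that the non-negative cherry contribution $\|d_U\|_2^2\,p^{\|H\|-2}$ together with the non-negative $g$-cycle contribution dominates the remaining terms --- is not carried out. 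Concretely: (i) mixed $U_1/U_2$ assignments do not vanish even when $H[F]$ is a forest; for the three-edge path labelled $U_1,U_2,U_1$ one is left with $\int d_U(x)\,d_U(y)\,U_2(x,y)\,dx\,dy$, a bilinear form whose cut-norm bound is in terms of $\|d_U\|_\infty^2$ rather than $\|d_U\|_2^2$, so it does not obviously compare with the cherry term; (ii) at order $g$ a $g$-cycle with a mixed labelling is not a priori non-negative and must also be absorbed; (iii) the claim that pure-$U_1$ contributions are bounded by $\|d_U\|_2^2\,\|U\|_\infty^{|F|-2}$ needs verification over all forest shapes (for example $t(K_{1,3},U_1)$ involves $\int d_U^3$ and cross terms). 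Finally, the remark that ``the absence of short odd cycles in $H$ is essential to rule out bad low-order terms'' is an assertion rather than an argument: at orders $3\le|F|<g$ every $H[F]$ is a forest regardless of the parity of $g$, so the parity hypothesis does not by itself constrain those orders --- it only acts at order $g$ and above. As written, the sufficiency half is an outline of a plan, with the hard estimates --- exactly what the Fox--Wei proof supplies --- left open.
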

Similarly, we say that a graph $H$ is \emph{locally $k$-common}
if for every $k\ge 2$, there exists $\varepsilon_0>0$ such that
\[t(H,W_1)+\cdots+t(H,W_k)\ge k^{-\|H\|+1}\]
for all graphons $W_1,\ldots,W_k$ such that $W_1+\cdots+W_k=1$, $\cut{W_i-1/k}\le\varepsilon_0/k$ 
and $\|W_i-1/k\|_{\infty}\le 1/k$ for all $i\in [k]$.

Fix a graphon $W$ and a real $\delta>0$ and
consider the set $\AAA(W,\delta)$ of all measurable functions $h:[0,1]\to [0,1]$ such that
\[\int_{[0,1]^2} h(x)W(x,y)h(y)\dd x\dd y\le\delta\|h\|_1^2.\]
Intuitively, for $\|h\|_1>0$, one can think of $h$ as a weight function on $[0,1]$ with the property that, 
if $x$ and $y$ are chosen independently at random according to the probability measure induced by
$h/\|h\|_1$, then the expected value of $W(x,y)$ is at most $\delta$. 
We define the \emph{$\delta$-independence ratio} of $W$ to be
\[\alpha_{\delta}(W) := \sup_{h\in\AAA(W,\delta)}\|h\|_1.\]

We next define a notion of a subgraphon that
is more involved than restricting a graphon to a measurable subset of $[0,1]$ and rescaling.
This notion will be used in the proof of Theorem~\ref{thm:append}
to apply induction to a ``sparse'' part of one of the graphons $W_1,W_2,\dots,W_k$.
Let $h:[0,1]\to [0,1]$ be a measurable function such that $\|h\|_1>0$ and
let $f:[0,\|h\|_1]\to [0,1]$ be the measurable function defined by
\[f(z) := \inf \left\{t\in [0,1]\mbox{ such that }\int_{[0,t]} h(x)\dd x\ge z\right\}.\]
Observe that
\[\int_A h(x)\dd x=|f^{-1}(A)|\]
for every measurable subset $A$ of $[0,1]$.
The subgraphon of $W$ induced by $h$, which is denoted by $W[h]$, is the graphon defined by
\[W[h](x,y):=W(f(x\cdot \|h\|_1),f(y\cdot \|h\|_1))\]
for every $(x,y)\in [0,1]^2$.
The graphon $W[h]$ is associated with the following sequence of random graphs.
Choose $n$ points independently at random based on the probability with density $h/\|h\|_1$ and
form a graph $G_{n}$ with vertex set $[n]$ by joining vertices $i$ and $j$ with probability $W(x_i,x_j)$.
Then $W[h]$ is a limit of the sequence $(G_{n})_{n\in\NN}$ with probability one. 
The definition of $W[h]$ implies that
\begin{equation}
t(H,W[h])=\frac{1}{\|h\|_1^{|H|}}\int_{[0,1]^{V(H)}} \prod_{u\in V(H)}h(u) \prod_{uv\in E(H)} W(x_u,x_v) \dd x_{V(H)}\label{eq:tWh}
\end{equation}
for every graph $H$.
In particular, $t(H,W)$ is at least $\|h\|_1^{|H|}\cdot t(H,W[h])$.

We conclude this section by relating certain ``reflection operations'' to homomorphism densities.
The arguments of this kind are standard in the area;
however, we have decided to provide a self-contained exposition for completeness.
Let $H$ be a graph and let $U\subseteq V(H)$ be an independent set of vertices of $H$.
For a graphon $W$, we define a function $t_W^H:[0,1]^U\to\RR$ as follows:
\[t_W^H(x_U)=\int_{[0,1]^{V(H)\setminus U}}\prod_{vv'\in E(H)}W(x_v,x_{v'})\dd x_{V(H)\setminus U};\]
note that the function $t_W^H$ depends on the choice of the set $U$.
Since the choice of the set $U$ will always be clear from the context,
we have decided not to include the set $U$ in the notation explicitly to keep the used notation simple.
Informally speaking, the function $t_W^H(x_U)$ counts the number of homomorphic copies of $H$ rooted at $x_U$.
Observe that
\[t(H,W)=\int_{[0,1]^U}t_W^H(x_U)\dd x_U.\]
We now state a proposition, which gives a lower bound on the homomorphism density of a graph obtained by reflecting $H$ along the set $U$.
\begin{proposition}
\label{prop:reflect}
Let $H$ be a graph, $n$ a positive integer and $U\subseteq V(H)$ an independent set of vertices of $H$.
Further,
let $H^n$ be the graph obtained by taking $n$ copies of $H$ and identifying the corresponding vertices of the set $U$,
i.e., the graph $H^n$ has $n|H|-(n-1)|U|$ vertices.
The following holds for every graphon $W$:
\[t(H^n,W)\ge t(H,W)^n.\]
\end{proposition}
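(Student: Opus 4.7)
The plan is to realize $t(H^n, W)$ as an $n$-th moment of the ``rooted count'' function $t_W^H$ and then invoke Jensen's inequality.

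First I would unfold $t(H^n,W)$ using the definition in~\eqref{eq:tHW}. Label the vertex set of $H^n$ as $U \cup \bigsqcup_{i=1}^{n}(V(H)\setminus U)_i$, where $(V(H)\setminus U)_i$ is the non-$U$ part of the $i$-th copy of $H$. Because $U$ is independent in $H$, it is independent in $H^n$ as well, so every edge of $H^n$ lies entirely inside one of the $n$ copies of $H$. Consequently, once the coordinates $x_U \in [0,1]^U$ are fixed, the integrand in~\eqref{eq:tHW} factors as a product of $n$ pieces, one for each copy, each piece involving only the variables indexed by $(V(H)\setminus U)_i$. Applying Fubini to each copy separately yields
\[
t(H^n,W) \;=\; \int_{[0,1]^U} \prod_{i=1}^n \left( \int_{[0,1]^{V(H)\setminus U}} \prod_{vv' \in E(H)} W(x_v, x_{v'}) \dd x_{V(H)\setminus U} \right) \dd x_U \;=\; \int_{[0,1]^U} t_W^H(x_U)^n \dd x_U,
\]
where in each inner integral the variables on $U$ are held fixed at the corresponding coordinates of $x_U$.

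Next I would apply Jensen's inequality. Since $[0,1]^U$ carries the uniform probability measure, $t_W^H \ge 0$, and $z \mapsto z^n$ is convex on $[0,\infty)$ for $n\ge 1$, we obtain
\[
\int_{[0,1]^U} t_W^H(x_U)^n \dd x_U \;\ge\; \left( \int_{[0,1]^U} t_W^H(x_U) \dd x_U \right)^{n} \;=\; t(H,W)^n,
\]
using the identity $t(H,W) = \int_{[0,1]^U} t_W^H(x_U) \dd x_U$ recalled in the text. Combining the two displays gives the desired bound $t(H^n,W) \ge t(H,W)^n$.

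There is no serious obstacle here: the only substantive point is the factorization of the integrand, which is precisely what the independence of $U$ buys us. Everything else is bookkeeping plus a single application of Jensen's inequality.
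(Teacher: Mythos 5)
Your proof is correct and follows essentially the same route as the paper: both observe that fixing $x_U$ factors the integrand so that $t_W^{H^n}(x_U)=t_W^H(x_U)^n$, and then apply Jensen's inequality to the convex map $z\mapsto z^n$. Your write-up simply spells out the factorization step (which the paper leaves implicit) in more detail.
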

\begin{proof}
Fix a graphon $W$.
We consider both graphs $H$ and $H^n$ with the set $U$ and note that
$t_W^{H^n}(x_U)=t_W^H(x_U)^n$ for every $x_U\in [0,1]^U$.
Hence, it follows that
\[t(H^n,W)=\int_{[0,1]^U}t_W^{H^n}(x_U)\dd x_U\ge\left(\int_{[0,1]^U}t_W^H(x_U)\dd x_U\right)^n=t(H,W)^n\]
by Jensen's Inequality.
\end{proof}
The same argument translates to the rooted setting,
which we formulate here for future reference but omit the proof as it is completely analogous to the proof of Proposition~\ref{prop:reflect}.
\begin{proposition}
\label{prop:reflect-rooted}
Let $H$ be a graph, $n$ a positive integer, $U\subseteq V(H)$ an independent set of vertices of $H$, and
$U'\subseteq V(H)$ an independent set that is a superset of $U$.
Further, let $H^n$ be the graph obtained from $H$ taking $n$ copies of $H$ and identifying the corresponding vertices of the set $U'$.
The following holds for every graphon $W$ and every $x_U\in [0,1]^U$:
\[t_W^{H^n}(x_U)\ge t_W^H(x_U)^n.\]
\end{proposition}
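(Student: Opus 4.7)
My plan is to mimic the proof of Proposition~\ref{prop:reflect} almost verbatim, with the only additional ingredient being that we now need to integrate out the vertices in $U' \setminus U$ as an ``outer'' step before applying Jensen's inequality.

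The first step is to unpack the structure of $H^n$. Since $U'$ is independent in $H$, identifying the copies of $U'$ across $n$ copies of $H$ still yields a graph with $U'$ independent, and the vertex set $V(H^n)\setminus U'$ is a disjoint union of $n$ sets $V_1,\dots,V_n$, each a copy of $V(H)\setminus U'$, with no edges between distinct $V_i$ and $V_j$. Consequently, for any fixed assignment $x_{U'}\in[0,1]^{U'}$, the integral defining $t_W^{H^n}(x_U)$, restricted to the coordinates in $V(H^n)\setminus U'$, factors as a product of $n$ identical integrals, each equal to $t_W^H(x_{U'})$. Pulling the integration over $x_{U'\setminus U}$ to the outside, I obtain the identity
\[
t_W^{H^n}(x_U) = \int_{[0,1]^{U'\setminus U}} t_W^H(x_{U'})^n \dd x_{U'\setminus U}.
\]

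The second step is a direct application of Jensen's inequality to the convex function $z\mapsto z^n$, using the fact that Lebesgue measure on $[0,1]^{U'\setminus U}$ is a probability measure. This gives
\[
\int_{[0,1]^{U'\setminus U}} t_W^H(x_{U'})^n \dd x_{U'\setminus U} \ge \left(\int_{[0,1]^{U'\setminus U}} t_W^H(x_{U'}) \dd x_{U'\setminus U}\right)^n = t_W^H(x_U)^n,
\]
where the final equality is just the definition of $t_W^H(x_U)$ via integrating out the coordinates in $V(H)\setminus U$, split as $(U'\setminus U)\cup(V(H)\setminus U')$.

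I do not anticipate any real obstacle here: the only thing to be careful about is correctly identifying how the edges of $H^n$ decompose relative to the independent set $U'$, which is what makes the inner integral factor into $n$ equal pieces. Everything else is identical to Proposition~\ref{prop:reflect}.
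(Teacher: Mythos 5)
Your proposal is correct and is exactly the ``completely analogous'' argument the paper alludes to when it omits the proof: root at $U'$, use that $t_W^{H^n}(x_{U'})=t_W^H(x_{U'})^n$ by the factorization across the $n$ copies, then apply Jensen's inequality to the integration over $U'\setminus U$. Nothing differs from the paper's intended route.
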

The following proposition is obtained by two applications of Proposition~\ref{prop:reflect},
first to the graph $K_{2,2}$ and $U$ being one of the two parts of $K_{2,2}$, and
second to the graph $K_{2,2n}$ and $U$ being the $2n$-vertex part of $K_{2,2n}$.
\begin{proposition}
\label{prop:Knn}
The following holds for every graphon $W$ and every $n\in\NN$:
\[t(K_{2n,2n},W)\ge t(K_{2,2},W)^{n^2}.\]
\end{proposition}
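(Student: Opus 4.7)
The plan is to follow the hint given in the paragraph preceding the statement and apply Proposition~\ref{prop:reflect} twice in succession, tracking carefully which graph arises after each identification.

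First I would take $H=K_{2,2}$ and let $U\subseteq V(K_{2,2})$ be one of the two bipartition classes, which is an independent set of size $2$. The graph $H^n$ produced by Proposition~\ref{prop:reflect} is obtained by taking $n$ disjoint copies of $K_{2,2}$ and gluing them along the two vertices in $U$; the result is a graph with $2$ shared vertices each adjacent to $2n$ private vertices, i.e., $H^n\cong K_{2,2n}$. Proposition~\ref{prop:reflect} then gives
\[
t(K_{2,2n},W)\ge t(K_{2,2},W)^n.
\]

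Second I would take $H=K_{2,2n}$ and let $U\subseteq V(K_{2,2n})$ be the part of size $2n$, which is again independent. Now $H^n$ consists of $n$ copies of $K_{2,2n}$ glued along the common $2n$-vertex class $U$; the $2n$ shared vertices of $U$ are each adjacent to all $2n$ remaining vertices (two private vertices from each of the $n$ copies), so $H^n\cong K_{2n,2n}$. A second application of Proposition~\ref{prop:reflect} yields
\[
t(K_{2n,2n},W)\ge t(K_{2,2n},W)^n\ge \bigl(t(K_{2,2},W)^n\bigr)^n=t(K_{2,2},W)^{n^2},
\]
as required.

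There is no real obstacle here beyond confirming the two combinatorial identifications above: that gluing $n$ copies of $K_{2,2}$ along one part produces $K_{2,2n}$, and that gluing $n$ copies of $K_{2,2n}$ along the $2n$-vertex part produces $K_{2n,2n}$. Both are immediate from the definition of $H^n$ in Proposition~\ref{prop:reflect} and from the fact that the chosen sets are independent in the respective host graphs, so Proposition~\ref{prop:reflect} is applicable in each step. Thus the entire argument reduces to chaining two reflection inequalities, and the resulting exponent $n^2=n\cdot n$ matches the claim exactly.
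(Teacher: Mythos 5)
Your argument is correct and matches the paper's own proof exactly: two applications of Proposition~\ref{prop:reflect}, first to $K_{2,2}$ with $U$ one of its two parts to obtain $t(K_{2,2n},W)\ge t(K_{2,2},W)^n$, then to $K_{2,2n}$ with $U$ the $2n$-vertex part to obtain $t(K_{2n,2n},W)\ge t(K_{2,2n},W)^n$, and then chaining the two. Your verification of the two graph identifications ($H^n\cong K_{2,2n}$ and $H^n\cong K_{2n,2n}$) is accurate.
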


\section{Non-bipartite $k$-common graphs}
\label{sec:conn}

This section is devoted to the proof of Theorem~\ref{thm:append}.
For $a,b\geq1$, we let $K_{2a,2b,C_5}$ be the graph obtained from $K_{2a,2b}$ by adding 
$b$ disjoint copies of $C_5$ and identifying one vertex of each of these copies with one vertex in the $2b$-vertex part of $K_{2a,2b}$ (each copy involves a different vertex of the part).
In particular, $K_{2n,2n,C_5}$ is the graph from the statement of Theorem~\ref{thm:append}.
We start with proving that $K_{2n,2,C_5}$ is locally Sidorenko in a certain strong sense;
note that the assumption on $W$ is weaker than that in the local Sidorenko property discussed in Section~\ref{sec:prelim}
since we do not require any bound on  $\|W-p\|_{\infty}$.

\begin{lemma}
\label{lm:Kn2-local}
For every $p_0\in(0,1)$, there exist $\varepsilon_0\in(0,1)$ such that the following holds.
If $W$ is a graphon with density $p\ge p_0$ such that $t(K_{2,2},W)\le p^4+\varepsilon_0$,
then $t(K_{2n,2,C_5},W)\ge p^{4n+5}$ for all $n\in\NN$.
\end{lemma}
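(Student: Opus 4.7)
The plan is to combine spectral analysis of $W$ with a log-convexity reduction to finitely many base cases.

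First I would rewrite the target by integrating out the $2n$ vertices of the large bipartite side of $K_{2n,2}$ and the four non-root vertices of the attached pentagon:
\[
a_n \;:=\; t(K_{2n,2,C_5},W) \;=\; \int_{[0,1]^2} K(x,y)^{2n}\,\psi(x)\,dx\,dy,
\]
where $K(x,y):=\int_0^1 W(x,z)W(y,z)\,dz$ is the ``codegree'' kernel (equivalently, $W^{(2)}$) and $\psi(x):=W^{(5)}(x,x)$ is the density of a $5$-cycle rooted at $x$; both are nonnegative. I would then diagonalize $W=\sum_i\lambda_i\,\phi_i\otimes\phi_i$ on $L^2[0,1]$ with orthonormal $\{\phi_i\}$. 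Since $W$ is a nonnegative kernel, Perron--Frobenius yields $\lambda_1=\max_i|\lambda_i|\geq\int W=p$, $\phi_1\geq 0$, and the pointwise bound $\|\phi_1\|_\infty\leq 1/\lambda_1\leq 1/p_0$ (from $\lambda_1\phi_1(x)=\int W(x,\cdot)\phi_1\leq\|\phi_1\|_1\leq\|\phi_1\|_2=1$). By Proposition~\ref{prop:spectrum}, the hypothesis $t(K_{2,2},W)=\sum_i\lambda_i^4\leq p^4+\varepsilon_0$ combined with $\lambda_1^4\geq p^4$ yields the central tail bound $\sum_{i\geq 2}\lambda_i^4\leq\varepsilon_0$.

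Next I would observe that $(a_n)_{n\geq 0}$ is log-convex: Cauchy--Schwarz with the nonnegative weight $\psi(x)\,dx\,dy$ gives
\[
a_n^2 \;=\; \Bigl(\int K^{n-1}\cdot K^{n+1}\,\psi\,dx\,dy\Bigr)^2 \;\leq\; a_{n-1}\,a_{n+1}
\]
for $n\geq 1$, so the ratios $a_{n+1}/a_n$ are non-decreasing, and $a_n\geq a_1(a_2/a_1)^{n-1}$ for every $n\geq 1$. It therefore suffices to prove the two finite-dimensional inequalities (i) $a_1\geq p^9$ and (ii) $a_2\geq p^4\cdot a_1$, which together yield $a_n\geq p^9\cdot p^{4(n-1)}=p^{4n+5}$, as required.

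For the base cases I would expand the integrands spectrally. Using $W^{(k)}(x,x)=\sum_i\lambda_i^k\phi_i(x)^2$,
\[
a_1 \;=\; \int W^{(4)}(x,x)\,W^{(5)}(x,x)\,dx \;=\; \sum_{i,j}\lambda_i^4\lambda_j^5\int\phi_i(x)^2\phi_j(x)^2\,dx.
\]
The principal term $(i,j)=(1,1)$ equals $\lambda_1^9\|\phi_1\|_4^4\geq\lambda_1^9\geq p^9$ (by Jensen, $\|\phi_1\|_4\geq\|\phi_1\|_2=1$). The off-diagonal contributions are aggregated through the auxiliary functions $\sigma_k(x):=\sum_{i\geq 2}\lambda_i^k\phi_i(x)^2$, which satisfy $\|\sigma_4\|_1\leq\varepsilon_0$ and $\|\sigma_4\|_\infty\leq\sum_{i\geq 2}\lambda_i^2\leq p$ (so $\|\sigma_4\|_2^2\leq p\varepsilon_0$); the crucial inequality $|\sigma_5|\leq\sigma_4$ (from $|\lambda_i|\leq 1$) gives the same $L^1$/$L^2$ control for $\sigma_5$. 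Combined with $\|\phi_1\|_\infty\leq 1/p_0$, these bounds make the absolute value of the total off-diagonal contribution $O_{p_0}(\varepsilon_0)$. A parallel expansion handles (ii).

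The hard part will be the absorption step for (i) and (ii): matching the possibly strict positive excess of the principal term over $p^9$, together with the strictly nonnegative cross term $\lambda_1^5\int\phi_1^2\sigma_4\,dx$, against the remaining negative off-diagonal contributions, in order to obtain the \emph{exact} inequality $a_1\geq p^9$ rather than only $a_1\geq p^9-O_{p_0}(\varepsilon_0)$. This relies on the nonnegativity of several cross terms and the crucial bound $|\sigma_5|\leq\sigma_4$. Because (i) and (ii) only concern the fixed graphs $K_{2,2,C_5}$ and $K_{4,2,C_5}$, no dependence on $n$ enters this spectral error estimate, and a single sufficiently small $\varepsilon_0=\varepsilon_0(p_0)$ suffices for all $n\in\NN$ via the log-convexity reduction.
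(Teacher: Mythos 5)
Your log-convexity reformulation is a genuinely different and appealing way to organize the argument. The identity $a_n = \int K(x,y)^{2n}\psi(x)\,dx\,dy$ with $K(x,y)=\int W(x,z)W(y,z)\,dz$ and $\psi = t_W^{C_5}$ is correct, the Cauchy--Schwarz step $a_n^2\le a_{n-1}a_{n+1}$ is valid since $\psi\ge 0$, and the reduction to $a_1\ge p^9$ and $a_2\ge p^4 a_1$ would indeed give $a_n\ge p^{4n+5}$ for all $n$. The paper instead uses a pointwise bound $t_W^{K_{2n,2}}(x)\ge t_W^{C_4}(x)^n$ (Proposition~\ref{prop:reflect-rooted}), keeps the $C_5$ factor separate, and restricts to the set $X_1=\{x:\beta_1^2(x)\ge p_0^2/2\}$ before applying Jensen; your approach replaces this by a single Cauchy--Schwarz in $n$, pushing all the $\varepsilon_0$-work into two base cases. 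That is a legitimate trade.

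However, the base cases are where the actual content of the lemma lives, and your plan for them has genuine gaps. First, the bound you call ``crucial,'' $|\sigma_5|\le\sigma_4$, points the wrong way: it yields $\lambda_1^4\int\phi_1^2\sigma_5+\lambda_1^5\int\phi_1^2\sigma_4\ge \lambda_1^4(\lambda_1-1)\int\phi_1^2\sigma_4$, which is \emph{negative} since $\lambda_1\le 1$. What you need is either $|\lambda_i|\le\varepsilon_0^{1/4}$ for $i\ge 2$ (giving $|\sigma_5|\le\varepsilon_0^{1/4}\sigma_4$ and a coefficient $\lambda_1-\varepsilon_0^{1/4}>0$), or the cleaner observation $\sigma_5+\lambda_1\sigma_4=\sum_{i\ge 2}\lambda_i^4(\lambda_1+\lambda_i)\phi_i^2\ge 0$ since $\lambda_1=\max_i|\lambda_i|$; this is precisely the sort of cancellation the paper exploits. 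Second, even with that fix, the leftover term $\int\sigma_4\sigma_5$, which can be negative, is only bounded by $-\varepsilon_0^{7/4}$, while the positive slack $\lambda_1^9(\|\phi_1\|_4^4-1)$ and the cross term $\int\phi_1^2(\sigma_5+\lambda_1\sigma_4)$ can both vanish (e.g.\ when $\phi_1\equiv 1$). You need a quantitative argument that when both of these are tiny, $\int\sigma_4\sigma_5$ is dominated by them; the paper achieves the analogous control by discarding the contribution of $\{x:\beta_1^2(x)<\pi_0\}$ and working only on $X_1$, a device with no counterpart in your write-up. Third, base case (ii) is dismissed with ``a parallel expansion handles (ii),'' but $a_2\ge p^4a_1$ is not obviously parallel; even after reducing it to $a_1\ge p^4 a_0$ via log-convexity, one must control the $\psi$-weighted average of the codegree $K$, and that is a separate argument you have not sketched. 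As it stands, what you have is a correct reduction to two finite inequalities whose proofs are asserted rather than given, and the stated key bound for closing them is incorrect.
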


\begin{proof}
We show that the statement of the lemma holds for $\varepsilon_0=p_0^7/16$.
Throughout the proof, fix a graphon $W$ with density $p\ge p_0$ such that $t(K_{2,2},W)-p^4=\varepsilon\le\varepsilon_0$.
If the set $\wsigma(W)$ is finite, then set $I=[|\wsigma(W)|]$ and set $I=\NN$ otherwise.
Let $\lambda_i$, $i\in I$, be the elements of $\wsigma(W)$ listed in the decreasing order of their absolute value.
Further, let $g_i:[0,1]\to \RR$ be an eigenfunction corresponding to $\lambda_i$.
Without loss of generality,
we assume that $\|g_i\|_2=1$ for all $i\in I$ and that
the eigenfunctions are orthogonal to one another, i.e.,
\[\int_{[0,1]} g_i(x)g_{i'}(x)\dd x=0\]
for any two distinct $i$ and $i'$ from $I$.
In particular, the functions $G_i$, $i\in I$, are orthonormal.
Since it holds that
\[\lambda_1=\max_{\substack{f\in L_2[0,1]\\ \|f\|_2=1}}\int_{[0,1]^2} f(x)W(x,y)f(y)\dd x\dd y,\]
it follows $\lambda_1\ge p$. In particular, $\lambda_1\ge p_0$.

For every $x\in [0,1]$, we define a measurable function $f_x:[0,1]\to [0,1]$ by setting $f_x(y)=W(x,y)$ for all $y\in[0,1]$,
i.e., $f_x$ describes the ``neighborhood'' of $x$ in the graphon $W$.
We next define functions $\beta_i$ such that
$\beta_i(x)$ would be the coordinate of $f_x$ with respect to $g_i$, $i\in I$, for an orthonormal basis 
extending $g_i$, $i\in I$,
i.e.,
\[\beta_i(x)=\int_{[0,1]} g_i(y)f_x(y)\dd y.\]
Since the $L_2$-norm of $f_x$ is at most one and the functions $g_i$, $i\in I$, are orthonormal,
we obtain that
\begin{equation}
\sum_{i\in I}\beta_i(x)^2\le 1\label{eq:norm}
\end{equation}
for every $x\in [0,1]$.
Since the series
\[\sum_{i\in I}\lambda_i g_i(x)g_i(y)\]
converges to $W$ in the $L_2$-norm~\cite[Section 7.5]{Lov12}
it follows that $\|\beta_i-\lambda_i g_i\|_2=0$,
i.e., $\beta_i(x)=\lambda_i g_i(x)$ for almost every $x\in [0,1]$.
In particular, it holds that
\begin{equation}
\label{eq:alpha1lambda1}
\int_{[0,1]}\beta_i(x)^2\dd x=\lambda_i^2.
\end{equation}

Next consider a cycle $C_k$ and let $U$ consist of any single vertex of $C_k$.
As the functions $g_i$, $i\in I$, are orthonormal and are eigenfunctions of $W$,
we get that
\begin{align}
t_W^{C_k}(x) &=\int_{[0,1]^{k-1}}f_x(y_1)W(y_1,y_2)W(y_2,y_3)\cdots W(y_{k-2},y_{k-1})f_x(y_{k-1})\dd y_1\cdots y_{k-1}\nonumber\\
             &=\sum_{i\in I}\lambda_i^{k-2}\beta_i(x)^2\label{eq:Ckx}
\end{align}
holds for every $k\ge 3$ and $x\in [0,1]$.
It follows that
\begin{equation}t(C_k,W)=\int_{[0,1]}t_W^{C_k}(x)\dd x=\sum_{i\in I}\lambda_i^{k-2}\int_{[0,1]}\beta_i(x)^2\dd x.\label{eq:Ckalpha}\end{equation}
On the other hand, Proposition~\ref{prop:spectrum} tells us that
\begin{equation}
t(C_{k},W)=\sum_{i\in I}\lambda_i^{k}.\label{eq:spectrum}
\end{equation}
In particular, we obtain for $k=4$ that
\[\varepsilon=t(K_{2,2},W)-p^4=\sum_{i\in I}\lambda_i^4-p^4\ge\sum_{i\in I\setminus\{1\}}\lambda_i^4,\]
which implies that $|\lambda_i|\le\varepsilon^{1/4}$ for every $i\in I\setminus\{1\}$.
In particular, $\lambda_1$ has multiplicity one.

Our aim is to estimate $t_W^{K_{2n,2,C_5}}(x)$
where $U$ is the set consisting of the vertex shared by $K_{2n,2}$ and $C_5$.
Observe that 
\begin{equation}
t_W^{K_{2n,2,C_5}}(x)=t_W^{K_{2n,2}}(x)\cdot t_W^{C_5}(x).\label{eq:prodKC5}
\end{equation}
We start by rewriting the identity \eqref{eq:Ckx} for $k=4$ and $k=5$:
\begin{align}
t_W^{C_4}(x) & =\lambda_1^2\beta_1^2(x)+\sum_{i\in I\setminus\{1\}}\lambda_i^2\beta_i^2(x) \label{eq:C4}\\
t_W^{C_5}(x) & =\lambda_1^3\beta_1^2(x)+\sum_{i\in I\setminus\{1\}}\lambda_i^3\beta_i^2(x). \label{eq:C5}
\end{align}
Note that all of the terms on the right sides of these two expressions are non-negative,
except for possibly the summation in \eqref{eq:C5}.
Using Proposition~\ref{prop:reflect-rooted} and the equation~\eqref{eq:C4},
we obtain that
\begin{align}
t_W^{K_{2n,2}}(x) &\ge t_W^{C_4}(x)^n\nonumber\\
                  & = \left(\lambda_1^2\beta_1^2(x)+\sum_{i\in I\setminus\{1\}}\lambda_i^2\beta_i^2(x)\right)^n\nonumber\\
		  & \ge\lambda_1^{2n}\beta_1^{2n}(x)+\lambda_1^{2n-2}\beta_1^{2n-2}(x)\sum_{i\in I\setminus\{1\}}\lambda_i^2\beta_i^2(x)\label{eq:C4n}
\end{align}
Our next goal is to show that, unless $f_x$ is almost completely orthogonal to $g_1$,
the homomorphism density of $K_{2n,2,C_5}$ rooted at $x$ is at least its expected average value.
Specifically, we will set $\pi_0=p_0^2/2$ and show that if $\beta_1^2(x)\ge\pi_0$, then
\begin{equation}
t_W^{K_{2n,2,C_5}}(x)\ge \lambda_1^{2n+3}\beta_1^{2n+2}(x).\label{eq:estimKC5}
\end{equation}
To this end, we substitute \eqref{eq:C5} and \eqref{eq:C4n} into \eqref{eq:prodKC5} to obtain
\begin{align*}
t_W^{K_{2n,2,C_5}}(x) & \ge\left(\lambda_1^{2n}\beta_1^{2n}(x)+\lambda_1^{2n-2}\beta_1^{2n-2}(x)\sum_{i\in I\setminus\{1\}}\lambda_i^2\beta_i^2(x)\right) \\
                      & \times\left(\lambda_1^3\beta_1^2(x)+\sum_{i\in I\setminus\{1\}}\lambda_i^3\beta_i^2(x)\right).
\end{align*}		      
Multiplying out, we obtain four terms.
One of them is the right side of \eqref{eq:estimKC5} and the remaining three terms are as follows:
\begin{align*}
& \lambda_1^{2n+1}\beta_1^{2n}(x)\left(\sum_{i\in I\setminus\{1\}}\lambda_i^2\beta_i^2(x)\right),\\
& \lambda_1^{2n}\beta_1^{2n}(x)\left(\sum_{i\in I\setminus\{1\}}\lambda_i^3\beta_i^2(x)\right) \mbox{ and}\\
& \lambda_1^{2n-2}\beta_1^{2n-2}(x)\left(\sum_{i\in I\setminus\{1\}}\lambda_i^2\beta_i^2(x)\right)\left(\sum_{i\in I\setminus\{1\}}\lambda_i^3\beta_i^2(x)\right).
\end{align*}
So, to establish \eqref{eq:estimKC5}, we need to show that the sum of these three terms is non-negative.
We first consider the sum of half of the first term and the whole of the second term.
Since $p_0\leq \lambda_1$ and $\lambda_i\leq \varepsilon^{1/4}$ for all $i\in I\setminus \{1\}$, we get
\begin{align*}
& \frac{1}{2}\lambda_1^{2n+1}\beta_1^{2n}(x)\left(\sum_{i\in I\setminus\{1\}}\lambda_i^2\beta_i^2(x)\right)+
  \lambda_1^{2n}\beta_1^{2n}(x)\left(\sum_{i\in I\setminus\{1\}}\lambda_i^3\beta_i^2(x)\right)\\
\ge &
  \left(\frac{p_0}{2}-\varepsilon^{1/4}\right)\left(\lambda_1^{2n}\beta_1^{2n}(x)\right)\left(\sum_{i\in I\setminus\{1\}}\lambda_i^2\beta_i^2(x)\right)\ge 0.
\end{align*}
Next, we estimate the sum of half of the first term and the third term as follows:
\begin{align*}
& \frac{1}{2}\lambda_1^{2n+1}\beta_1^{2n}(x)\left(\sum_{i\in I\setminus\{1\}}\lambda_i^2\beta_i^2(x)\right)\\
& +
  \lambda_1^{2n-2}\beta_1^{2n-2}(x)\left(\sum_{i\in I\setminus\{1\}}\lambda_i^2\beta_i^2(x)\right)\left(\sum_{i\in I\setminus\{1\}}\lambda_i^3\beta_i^2(x)\right)\\
\ge &
  \left(\frac{1}{2}\lambda_1^3\beta_1^2(x)-\sum_{i\in I\setminus\{1\}}|\lambda_i|^3\beta_i^2(x)\right)
  \lambda_1^{2n-2}\beta_1^{2n-2}(x)
  \left(\sum_{i\in I\setminus\{1\}}\lambda_i^2\beta_i^2(x)\right)\\
\ge &
  \left(\frac{p^3\pi_0}{2}-\varepsilon^{3/4}\sum_{i\in I\setminus\{1\}}\beta_i^2(x)\right)
  \lambda_1^{2n-2}\beta_1^{2n-2}(x)
  \left(\sum_{i\in I\setminus\{1\}}\lambda_i^2\beta_i^2(x)\right)\\
\ge &
  \left(\frac{p_0^5}{4}-\varepsilon^{3/4}\right)
  \lambda_1^{2n-2}\beta_1^{2n-2}(x)
  \left(\sum_{i\in I\setminus\{1\}}\lambda_i^2\beta_i^2(x)\right).
\end{align*}  
The last inequality follows from \eqref{eq:norm}. The final expression is non-negative (with room to spare)
by the choice of $\varepsilon_0$. 

The statement would follow from \eqref{eq:alpha1lambda1} and \eqref{eq:estimKC5} by a convexity argument
if $\beta_1^2(x)\ge\pi_0$ held for almost all $x\in [0,1]$.
As this need not be the case for almost all $x\in [0,1]$, a finer argument is needed.
Let $X_1$ be the set of $x\in[0,1]$ such that $\beta_1^2(x)\ge\pi_0$ and let $\delta=1-|X_1|$.
By \eqref{eq:alpha1lambda1} for $i=1$, we have
\[\int_{X_1}\beta_1^2(x)\dd x=\int_{[0,1]}\beta_1^2(x)\dd x-\int_{[0,1]\setminus X_1}\beta_1^2(x)\dd x\ge \lambda_1^2-\delta\pi_0.\]
The equation \eqref{eq:alpha1lambda1} for $i=1$ also implies that $\delta<1$;
otherwise, the integral of $\beta_1^2(x)$, which is equal to $\lambda_1^2\ge p_0^2$, would be at most $\pi_0=p_0^2/2$.
Using Jensen's Inequality, we have
\begin{align*}
\int_{X_1}\beta_1^{2n+2}(x)\dd x & \ge \frac{(\lambda_1^2-\delta\pi_0)^{n+1}}{(1-\delta)^n}\\
  & = \left(\frac{\lambda_1^2-\delta\pi_0}{1-\delta}\right)^{n-1}\cdot\frac{\lambda_1^4-2\delta\pi_0\lambda_1^2+\delta^2\pi_0^2}{1-\delta} \\
  & \ge \lambda_1^{2n-2}\cdot\frac{\lambda_1^4-2\delta\pi_0\lambda_1^2+\delta^2\pi_0^2}{1-\delta} \\
  & = \lambda_1^{2n-2}\cdot\left(\lambda_1^4+\frac{\delta\lambda_1^4-2\delta\pi_0\lambda_1^2+\delta^2\pi_0^2}{1-\delta}\right) \\
  & \ge \lambda_1^{2n+2}+\lambda_1^{2n-2}\cdot\frac{\delta\lambda_1^2(\lambda_1^2-2\pi_0)}{1-\delta}\ge \lambda_1^{2n+2}.
\end{align*}
In the step between the second and third lines and in the last line,
we used the fact that $2\pi_0=p_0^2\leq p^2\leq \lambda_1^2$. 
Since the estimate \eqref{eq:estimKC5} holds for every $x\in X_1$, we obtain that
\[
t(K_{2n,2,C_5},W) \ge\int_{X_1}t_W^{K_{2n,2,C_5}}(x)\dd x
		 \ge\int_{X_1}\lambda_1^{2n+3}\beta_1^{2n+2}(x)\dd x \ge \lambda_1^{4n+5}\ge p^{4n+5}.
\]
This concludes the proof of the lemma.
\end{proof}

The next lemma follows from Lemma~\ref{lm:Kn2-local} by applying Proposition~\ref{prop:reflect-rooted}
for the graph $H=K_{2n,2,C_5}$ and the set $U$ being the part of $K_{2n,2}$ with $2n$ vertices.

\begin{lemma}
\label{lm:Knn-local}
For every $p_0\in(0,1)$, there exists $\varepsilon_0\in (0,1)$ such that the following holds.
If $W$ is a graphon with density $p\ge p_0$ such that $t(K_{2,2},W)\le p^4+\varepsilon_0$,
then $t(K_{2n,2n,C_5},W))\ge p^{4n^2+5n}$ for all $n\in\NN$.
\end{lemma}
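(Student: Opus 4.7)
The plan is to bootstrap Lemma~\ref{lm:Kn2-local} by the reflection construction of Proposition~\ref{prop:reflect} (equivalently, the rooted version with a Jensen step). Take $\varepsilon_0$ to be exactly the constant produced by Lemma~\ref{lm:Kn2-local} for the given $p_0$, and fix a graphon $W$ with density $p\ge p_0$ and $t(K_{2,2},W)\le p^4+\varepsilon_0$. Lemma~\ref{lm:Kn2-local} then yields
\[ t(K_{2n,2,C_5},W)\ge p^{4n+5}\qquad\text{for every }n\in\NN. \]

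The key structural observation is that $K_{2n,2n,C_5}$ is obtained by gluing $n$ copies of $K_{2n,2,C_5}$ along their $2n$-vertex side. Indeed, in one copy of $K_{2n,2,C_5}$ the side of size $2n$ is an independent set of $K_{2n,2}$ (and disjoint from the attached $C_5$, which is pasted on a vertex of the $2$-vertex side). Taking $n$ such copies and identifying the corresponding $2n$-vertex sides produces exactly one part of size $2n$, another independent part of size $2n$ (the union of the $n$ pairs) with $n$ pendant $C_5$'s, each attached to a distinct vertex of the second part; by the defining description of $K_{2a,2b,C_5}$ this is $K_{2n,2n,C_5}$.

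Therefore I would apply Proposition~\ref{prop:reflect} to $H=K_{2n,2,C_5}$ with $U$ the $2n$-vertex side of $K_{2n,2}$; then $H^n=K_{2n,2n,C_5}$, so
\[ t(K_{2n,2n,C_5},W)\ge t(K_{2n,2,C_5},W)^n\ge \left(p^{4n+5}\right)^n=p^{4n^2+5n}, \]
which is the desired conclusion. (Alternatively, as hinted in the statement, one can invoke the rooted reflection Proposition~\ref{prop:reflect-rooted} with $U=U'$ equal to the $2n$-vertex side to get $t_W^{K_{2n,2n,C_5}}(x_U)\ge t_W^{K_{2n,2,C_5}}(x_U)^n$ pointwise, then integrate over $x_U$ and use Jensen's inequality to move the $n$th power outside.)

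There is no real obstacle here once one identifies the correct gluing: the whole proof is a two-line combination of the ``local'' lower bound from Lemma~\ref{lm:Kn2-local} with a reflection/convexity inequality, so the only thing to verify carefully is the combinatorial claim that identifying $n$ copies of $K_{2n,2,C_5}$ along the independent $2n$-vertex side produces precisely $K_{2n,2n,C_5}$, which is immediate from the definition of $K_{2a,2b,C_5}$.
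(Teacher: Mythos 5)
Your proposal is correct and matches the paper's (unwritten) proof exactly: the paper states that Lemma~\ref{lm:Knn-local} follows from Lemma~\ref{lm:Kn2-local} by reflecting $K_{2n,2,C_5}$ along its $2n$-vertex side, which is precisely what you carry out. Your use of Proposition~\ref{prop:reflect} rather than the rooted version Proposition~\ref{prop:reflect-rooted} is the more natural citation here (the paper's mention of the rooted version amounts to the same thing with an empty root set), and your combinatorial check that $H^n=K_{2n,2n,C_5}$ and the exponent count $n(4n+5)=4n^2+5n$ are both accurate.
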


The second ingredient for the proof of Theorem~\ref{thm:append} is the next lemma,
which covers the case when $t(K_{2,2},W)$ is substantially larger than $t(K_2,W)^4$
unless the graphon $W$ contains a large sparse part.

\begin{lemma}
\label{lm:Knn-far}
For every $p_0\in (0,1)$ and every $\varepsilon_0\in (0,1)$,
there exist $n_0\in\NN$ and $\delta_0\in (0,1)$ such that
the following holds for every graphon $W$ with density $p\ge p_0$ such that $t(K_{2,2},W)\ge p^4+\varepsilon_0$:
\begin{itemize}
\item $t(K_{2n,2n,C_5},W)\ge p^{4n^2+5n}$ for every $n\ge n_0$, or
\item $\alpha_{p_0}(W)\ge\delta_0$.
\end{itemize}
\end{lemma}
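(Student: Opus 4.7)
}
The plan is to prove the dichotomy by using Proposition~\ref{prop:Knn} to convert the excess $K_{2,2}$-density into an exponential-in-$n^2$ gain on $K_{2n,2n}$-density, and then to decide — based on how much of $W$ carries a substantial rooted $C_5$-density — whether this gain suffices to pay for the $n$ pasted copies of $C_5$ or whether a witness for $\alpha_{p_0}(W)\ge\delta_0$ can be read off.

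First I would record the key exponential bound: by Proposition~\ref{prop:Knn},
\[
t(K_{2n,2n},W)\ge t(K_{2,2},W)^{n^2}\ge (p^4+\varepsilon_0)^{n^2}\ge (1+\varepsilon_0)^{n^2}\,p^{4n^2},
\]
so we gain a factor that grows like $\gamma^{n^2}$ for some $\gamma=\gamma(\varepsilon_0)>1$ over the ``random'' count. In order to deduce $t(K_{2n,2n,C_5},W)\ge p^{4n^2+5n}$, we only need to recover $p^{5n}$ out of the rooted $C_5$ factors, and the $\gamma^{n^2}$ surplus will crush any $C^n$ multiplicative loss once $n\ge n_0$.

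Next I would decompose
\[
t(K_{2n,2n,C_5},W)=\int_{[0,1]^{2n}} t_W^{K_{2n,2n}}(x_1,\ldots,x_{2n})\prod_{k=1}^{n} c_5(x_{2k-1})\,\dd x_1\cdots\dd x_{2n},
\]
where $c_5(x):=t_W^{C_5}(x)$ and $U=\{a_1,a_3,\ldots,a_{2n-1}\}$ is the ``rooted'' independent set carrying the pasted $C_5$'s. The goal is to bound the conditional expectation of $\prod_k c_5(x_{2k-1})$ with respect to the probability measure on $[0,1]^{2n}$ proportional to $t_W^{K_{2n,2n}}(x_1,\ldots,x_{2n})$.

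Now I would set a threshold $\eta=\eta(p_0,\varepsilon_0)>0$ and split on the function $h(x):=\mathbf{1}[c_5(x)\le\eta]$. If $\|h\|_1\ge\delta_0$, the plan is to show that $h$ (or a slight modification of it built from $c_5$ itself, e.g.\ $h(x)=\max\{0,1-c_5(x)/\eta\}$) lies in $\AAA(W,p_0)$: the heuristic is that $c_5(x)$ is small only when $x$ is badly connected through $W$, so the weighted pair density $\int h(x)W(x,y)h(y)\,\dd x\dd y$ should be at most $p_0\|h\|_1^2$, giving the second conclusion. If instead $\|h\|_1<\delta_0$, then $c_5(x)>\eta$ on a set of measure at least $1-\delta_0$; combining with the exponential surplus from Step 1 and removing from the $K_{2n,2n}$-integration the ``bad'' coordinates where $c_5$ is small (which contribute at most a tiny fraction of $t(K_{2n,2n},W)$ provided $\delta_0$ is sufficiently small), we obtain
\[
t(K_{2n,2n,C_5},W)\ge \tfrac12\,\eta^n\,t(K_{2n,2n},W)\ge \tfrac12\,\eta^n(1+\varepsilon_0)^{n^2}p^{4n^2},
\]
and for $n\ge n_0$ the exponential factor $(1+\varepsilon_0)^{n^2}$ dominates $(p^5/\eta)^n$, giving $p^{4n^2+5n}$.

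The main obstacle is the first branch of the dichotomy: verifying that the set where $c_5$ is small genuinely certifies $\alpha_{p_0}(W)\ge\delta_0$. A priori, $c_5(x)$ being small does not immediately imply that $x$ has low $W$-weighted degree into the set $\{c_5\le\eta\}$, since $c_5$ is a degree-four (in $W$) spectral sum. My plan for handling this would be to use Lemma~\ref{lm:reg} to replace $W$ by a step graphon $W'$ with a bounded number of parts (preserving $t(K_{2,2},\cdot)$ and $c_5$ up to small error via Lemma~\ref{lm:cutdist}), so that the set $\{c_5\le\eta\}$ is (approximately) a union of parts. On this discretized object, either some block-union of total mass at least $\delta_0$ has internal $W'$-density at most $p_0$, which yields the required $h\in\AAA(W,p_0)$ directly, or every large block-union has internal density above $p_0$, in which case standard degree bounds force $c_5>\eta$ on almost all of $[0,1]$ and we fall into the second branch.
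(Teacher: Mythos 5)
Your high-level dichotomy --- either read off a sparse part as a witness for $\alpha_{p_0}(W)\ge\delta_0$, or use the excess in $t(K_{2,2},W)$ (via Proposition~\ref{prop:Knn}) to pay the $p^{5n}$ cost of the pasted $C_5$'s --- matches the paper's strategy, and the observation that the $(1+\varepsilon_0)^{n^2}$ surplus dominates any $C^n$ loss is exactly how $n_0$ is chosen. However, both concrete steps in your realization have gaps.

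First, thresholding on $c_5(x):=t_W^{C_5}(x)$ does not yield a sparse set. Consider the complete bipartite graphon $W$ that is $1$ on $[0,\tfrac12]\times(\tfrac12,1]$ and its reflection, and $0$ elsewhere: then $c_5\equiv 0$ because $W$ has no odd cycles, so $\{c_5\le\eta\}=[0,1]$, whose internal density is $p=\tfrac12>p_0$ and which therefore does \emph{not} lie in $\AAA(W,p_0)$. (The variant $h(x)=\max\{0,1-c_5(x)/\eta\}$ fails for the same reason, since it is also $\equiv 1$.) You flag this as the main obstacle, but the regularity-lemma patch does not resolve it: the dichotomy ``some large block-union has internal density $\le p_0$, or every large block-union has internal density $>p_0$'' is not by itself a ``standard degree bound'' that forces $c_5>\eta$; even if every large set has high internal density, individual points can have small degree, and removing them once can create new small-degree points relative to the remainder. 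The paper's proof handles exactly this by an \emph{iterated} peeling: it defines $A_0=\emptyset$ and $A_i=\{x:\int_{[0,1]\setminus A_{i-1}}W(x,y)\,\dd y\le d_0\}$, takes $A=\bigcup_i A_i$, and shows that $A$ has internal density at most $2d_0/|A|$, so if $|A|\ge\varepsilon_0/8$ then $\mathbf 1_A\in\AAA(W,p_0)$; and if $|A|$ is small, then every $x\notin A$ has degree at least $d_0$ into $[0,1]\setminus A$, which after one more case split (is $W$ restricted to the neighborhood of $x$ sparse or not?) yields $t_{W'}^{C_5}(x)\ge d_0^4p_0^3$ via $P_4$ being Sidorenko applied to the subgraphon $W[h_x]$. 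This degree-peeling step is essential and is absent from your plan; it cannot be replaced by $c_5$-thresholding.

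Second, the bound $t(K_{2n,2n,C_5},W)\ge\tfrac12\,\eta^n\,t(K_{2n,2n},W)$ obtained by ``removing the bad coordinates from the $K_{2n,2n}$-integration'' is not justified: a set of measure $<\delta_0$ can carry a non-negligible (indeed $n$-dependent) fraction of the $K_{2n,2n}$-mass, and $\delta_0$ must be chosen before $n$. The paper instead zeroes $W$ on $A$ to get $W'$, estimates $t(K_{2,2},W')\ge t(K_{2,2},W)-4|A|$ (a loss controlled by a \emph{constant} number of coordinates), and only then reflects: $t(K_{2n,2n},W')\ge t(K_{2,2},W')^{n^2}$. Reordering the ``remove then reflect'' steps in this way is what makes the loss uniform in $n$.
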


\begin{proof}
Set $\delta_0:=p_0\varepsilon_0/16$ and set $d_0:=\delta_0$.
The reason that we let $\delta_0$ and $d_0$ to represent the same quantity is that
they play different roles in the proof; $\delta_0$ is the lower bound on the 
$p_0$-independence ratio in the statement of the theorem whereas $d_0$ is the 
threshold for considering a point $x\in [0,1]$ to have ``small degree'' in a 
graphon $W$. Choose $n_0$ to be large enough so that
\[\left(1+\varepsilon_0/2\right)^{n_0}d_0^4p_0^3\ge 1.\]

Fix a graphon $W$ with density $p\ge p_0$ such that $t(K_{2,2},W)\ge p^4+\varepsilon_0$.
We iteratively define sets $A_i$, $i\in\NN$, such that
$A_i$ is the set of all $x\in [0,1]$ with ``small degree'' when disregarding neighbors in $A_{i-1}$.
Formally, we let $A_0=\emptyset$ and let
 $A_i$, $i\in\NN$, be the set of all $x\in [0,1]$ such that
\[\int_{[0,1]\setminus A_{i-1}}W(x,y)\dd y\le d_0.\]
Note that $A_{i-1}\subseteq A_i$ for every $i\in\NN$.
Let $A$ be the union of all sets $A_i$, $i\in\NN$, and
observe that, for every $x\in [0,1]\setminus A$,
\[\int_{[0,1]\setminus A} W(x,y)\dd y=\lim_{i\to\infty}\int_{[0,1]\setminus A_{i-1}}W(x,y)\dd y.\]
In particular, it holds that
\[\int_{[0,1]\setminus A} W(x,y)\dd y\ge d_0\]
for every $x\in [0,1]\setminus A$.

We next distinguish two cases depending on the measure of $A$, and
we first analyze the case that $|A|\ge\varepsilon_0/8$.
We start with estimating the density of $W$ on the set $A$:
\begin{align*}
\int_{A^2} W(x,y)\dd x\dd y &= \sum_{i\in\NN}\int\limits_{(A_i\setminus A_{i-1})^2}W(x,y)\dd x\dd y 
                              + 2\quad\int\limits_{\mathclap{(A_i\setminus A_{i-1})\times(A\setminus A_{i})}}\quad W(x,y)\dd x\dd y\\
                            & \le \sum_{i\in\NN}\int\limits_{(A_i\setminus A_{i-1})^2}W(x,y)\dd x\dd y 
			      +2\quad\int\limits_{\mathclap{(A_i\setminus A_{i-1})\times([0,1]\setminus A_i)}}\quad W(x,y)\dd x\dd y\\
                            & \le 2\sum_{i\in\NN}\int\limits_{(A_i\setminus A_{i-1})\times([0,1]\setminus A_{i-1})}W(x,y)\dd x\dd y\\
			    & \le 2\sum_{i\in\NN}\left|A_i\setminus A_{i-1}\right|d_0\le 2|A|d_0.
\end{align*}
It follows that
\[\frac{\int_{A^2} W(x,y)\dd x\dd y}{|A|^2}\le\frac{2d_0}{|A|}=\frac{p_0\varepsilon_0}{8|A|}\le p_0.\]
Thus, the characteristic function of $A$ certifies that $\alpha_{p_0}(W)\ge\varepsilon_0/8\ge\delta_0$.

In the rest of the proof, we assume that $|A|\le\varepsilon_0/8$.
We show that the homomorphism density of $K_{2n,2n,C_5}$ is large enough even if we disregard the points contained in $A$.
To do this, we set $W'$ to be the graphon defined by
\[W'(x,y)=\begin{cases}
          0 & \mbox{if $x\in A$ or $y\in A$,}\\
          W(x,y) & \mbox{otherwise.}
	  \end{cases}\]
We next estimate the homomorphism density $K_{2n,2n}$ in $W'$ using Proposition~\ref{prop:Knn} as follows:
\begin{align*}
t(K_{2n,2n},W') & \ge t(K_{2,2},W')^{n^2}\\
                & \ge \left(t(K_{2,2},W)-4|A|\right)^{n^2}\\
		& \ge \left(p^4+\varepsilon_0-\varepsilon_0/2\right)^{n^2}=\left(p^4+\varepsilon_0/2\right)^{n^2}
\end{align*}
We next combine these copies of $K_{2n,2n}$ with copies of $C_5$ rooted at $x\in [0,1]\setminus A$
unless $W$ contains a sparse part.
Consider $x\in [0,1]\setminus A$ and let $h(y)=W'(x,y)$.
Note that
\[\int_{[0,1]}h(y)\dd y=\int_{[0,1]}W'(x,y)\dd y=\int_{[0,1]\setminus A}W(x,y)\ge d_0=\delta_0.\]
Since $h(y)=0$ for $y\in A$, we obtain that
\begin{equation}
\int_{[0,1]^2}h(y)W'(y,z)h(z)\dd y\dd z=\int_{[0,1]^2}h(y)W(y,z)h(z)\dd y\dd z.\label{eq:hdens}
\end{equation}
If the integral in~\eqref{eq:hdens} is less than $p_0\|h\|_1^2$,
then $\alpha_{p_0}(W)\ge\delta_0$,
which is the second conclusion of the lemma.

Hence, we can assume that the integral in~\eqref{eq:hdens} is at least $p_0\|h\|_1^2$ for every $x\in [0,1]\setminus A$.
Since the $3$-edge path $P_4$ is Sidorenko, we conclude by considering the graphon $W[h]$ that
\[t_{W'}^{C_5}(x)\ge \|h\|_1^4\cdot t(P_4,W[h])\ge \|h\|_1^4 p_0^3\ge d_0^4p_0^3\]
for every $x\in [0,1]\setminus A$.
It follows that
\begin{align*}
t(K_{2n,2n,C_5},W) & \ge t(K_{2n,2n,C_5},W') \\
                   & \ge t(K_{2n,2n},W')\cdot\left(d_0^4p_0^3\right)^n \\
		   & \ge \left(p^4+\varepsilon_0/2\right)^{n^2}\left(d_0^4p_0^3\right)^n \\
		   & \ge p^{4n^2}\left(1+\varepsilon_0/2\right)^{n^2}\left(d_0^4p_0^3\right)^n \\
		   & \ge p^{4n^2}\left(\left(1+\varepsilon_0/2\right)^{n_0}d_0^4p_0^3\right)^n\ge p^{4n^2}\ge p^{4n^2+5n}.
\end{align*}
Hence, the first conclusion of the lemma holds.
\end{proof}

We are now ready to prove the main theorem of this section, which implies Theorem~\ref{thm:append}.
Theorem~\ref{thm:Knn-common} is a variant of Theorem~\ref{thm:append}
where a very small proportion of the edges can be left uncolored.
This additional flexibility is needed for an inductive argument used in the proof of the theorem.

\begin{theorem}
\label{thm:Knn-common}
For every $k\in\NN$, there exist $n_k\in\NN$ and $\delta_k\in (0,1)$ with the following property.
If $W_1,\ldots,W_k$ are graphons such that $t(K_2,W_1+\cdots+W_k)\ge 1-\delta_k$, then
\[\sum_{i\in [k]} t(K_{2n,2n,C_5},W_i)\ge\frac{t(K_2,W_1+\cdots+W_k)^{4n^2+5n}}{k^{4n^2+5n-1}}\]
for every $n\ge n_k$.
\end{theorem}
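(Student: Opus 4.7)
The plan is to proceed by induction on $k$. In the base case $k=1$, given $W_1$ with density $p\ge 1-\delta_1$, I would apply Lemmas~\ref{lm:Knn-local} and~\ref{lm:Knn-far} with parameter $p_0:=1/2$; together they yield $t(K_{2n,2n,C_5},W_1)\ge p^{4n^2+5n}$ for $n\ge n_0$ unless the second alternative of Lemma~\ref{lm:Knn-far} holds. Any witness $h$ to $\alpha_{p_0}(W_1)\ge\delta_0$ would satisfy $\int h(x)h(y)\bigl(1-W_1(x,y)\bigr)\dd x\dd y\ge(1-p_0)\|h\|_1^2\ge\delta_0^2/2$, contradicting the trivial bound $\int(1-W_1)\le\delta_1$ once $\delta_1<\delta_0^2/2$.

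For the inductive step $k\ge 2$, let $p_i:=t(K_2,W_i)$ and $p:=\sum_i p_i\ge 1-\delta_k$. I would choose $p_0:=\min\{1/(3k^2),\delta_{k-1}/2\}$, invoke Lemmas~\ref{lm:Knn-local} and~\ref{lm:Knn-far} with this $p_0$ to obtain $\varepsilon_0$, $n_0$, $\delta_0$, and set $\delta_k:=\min\{\delta_0^2(\delta_{k-1}-p_0),1/(2(k+1))\}$, which inductively guarantees $\delta_{k-1}\le 1/(2k)$. If some $W_i$ (WLOG $i=k$) admits a witness $h$ with $\|h\|_1\ge\delta_0$ to $\alpha_{p_0}(W_k)\ge\delta_0$, I would apply the inductive hypothesis to the $k-1$ subgraphons $W_j[h]$, $j<k$. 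Since $h\le 1$, the density of their sum is
\[q\ge t(K_2,W[h])-p_0\ge 1-\frac{1-p}{\|h\|_1^2}-p_0\ge 1-\frac{\delta_k}{\delta_0^2}-p_0\ge 1-\delta_{k-1},\]
so the induction applies, and combined with $t(H,W_j)\ge\|h\|_1^{|H|}t(H,W_j[h])$ and $|K_{2n,2n,C_5}|=8n$ gives
\[\sum_{j<k}t(K_{2n,2n,C_5},W_j)\ge\|h\|_1^{8n}\cdot\frac{q^{4n^2+5n}}{(k-1)^{4n^2+5n-1}}.\]
Dividing by the target $p^{4n^2+5n}/k^{4n^2+5n-1}$ produces the ratio $\|h\|_1^{8n}(q/p)^{4n^2+5n}(k/(k-1))^{4n^2+5n-1}$; using $(q/p)\cdot k/(k-1)\ge(1-\delta_{k-1})k/(k-1)\ge 1+\tfrac{1}{2(k-1)}$, the factor $(1+\tfrac{1}{2(k-1)})^{4n^2+5n-1}=\exp(\Theta(n^2/k))$ overwhelms $\|h\|_1^{8n}\ge\delta_0^{8n}=\exp(-\Theta(n))$ once $n\ge n_k$ is chosen large enough.

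In the complementary case where every $W_i$ has $\alpha_{p_0}(W_i)<\delta_0$, either Lemma~\ref{lm:Knn-local} (when $t(K_{2,2},W_i)\le p_i^4+\varepsilon_0$) or the first conclusion of Lemma~\ref{lm:Knn-far} (otherwise, since the second is excluded by our Case assumption) yields $t(K_{2n,2n,C_5},W_i)\ge p_i^{4n^2+5n}$ for every $i$ with $p_i\ge p_0$ and $n\ge n_0$. Writing $I:=\{i:p_i\ge p_0\}$ and applying the power-mean inequality,
\[\sum_it(K_{2n,2n,C_5},W_i)\ge\sum_{i\in I}p_i^{4n^2+5n}\ge\frac{\bigl(p-(k-|I|)p_0\bigr)^{4n^2+5n}}{|I|^{4n^2+5n-1}}.\]
The ratio of this to the target is $\bigl(1-(k-|I|)p_0/p\bigr)^{4n^2+5n}(k/|I|)^{4n^2+5n-1}$, which equals $1$ when $|I|=k$ and strictly exceeds $1$ when $|I|<k$ (since $p\ge 1/2>kp_0$ by the choice of $p_0$), the gap being amplified to size $\exp(\Theta(n^2/k))$ by the $(4n^2+5n-1)$-th power, so that $n\ge n_k$ again suffices.

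The main obstacle is parameter calibration in the sparse case: the prefactor $\|h\|_1^{8n}$ decays exponentially in $n$ and must be beaten by the $\exp(\Theta(n^2/k))$ gain provided by the improved denominator $(k-1)^{4n^2+5n-1}$ from induction; this forces $n_k$ to grow roughly like $k\log(1/\delta_0)$, and the recursive relation $\delta_k\lesssim\delta_0^2\delta_{k-1}$ with $\delta_0$ itself depending on $p_0=\Theta(1/k^2)$ through Lemma~\ref{lm:Knn-far} forces $\delta_k$ to shrink rapidly with $k$. Finally, taking $n\ge\max\{n_{k-1},n_0,n_k\}$ and using the choices above ensures all estimates align.
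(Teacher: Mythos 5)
Your proposal follows the paper's proof essentially exactly: induction on $k$, a dichotomy on whether some $W_i$ admits a $p_0$-sparse part via Lemma~\ref{lm:Knn-far}, restriction to the subgraphons $W_j[h]$ and recursion with $k-1$ colors in the sparse case, and termwise application of Lemmas~\ref{lm:Knn-local}/\ref{lm:Knn-far} followed by the power-mean inequality in the complementary case. The minor deviations — handling the base case by ruling out the sparse alternative rather than taking $p_0=3/4$ so that only Lemma~\ref{lm:Knn-local} is needed, and treating the vacuous subcase $|I|<k$, which cannot occur since $\alpha_{p_0}(W_i)<\delta_0<1$ already forces $p_i\ge p_0$ via the constant test function $h\equiv 1$ — are harmless, and the parameter calibration matches the paper's up to inessential constants.
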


\begin{proof}
We proceed by induction on $k\in\NN$.
Suppose first that $k=1$.
We apply Lemma~\ref{lm:Knn-local} with $p_0=3/4$ to get $\varepsilon_0\in (0,1)$.
We show that the statement of the theorem is true for $n_1=1$ and $\delta_1=\varepsilon_0/4$.
Let $W_1$ be a graphon with density $p\ge 1-\delta_1\ge 3/4$.
Observe that
\[t(K_{2,2},W)-p^4\le 1-p^4\le 1-(1-\delta_1)^4\le 4\delta_1=\varepsilon_0.\]
Hence, Lemma~\ref{lm:Knn-local} implies that
\[t(K_{2n,2n,C_5},W_1)\ge p^{4n^2+5n}.\]
This completes the proof in the base case $k=1$.

Now, suppose that we have already established the existence of $n_1,\ldots,n_{k-1}$ and $\delta_1,\ldots,\delta_{k-1}$.
Choose $p_0=\delta_{k-1}/4k$ and apply Lemma~\ref{lm:Knn-local} to get $\varepsilon_0$.
We then apply Lemma~\ref{lm:Knn-far} with $p_0$ and $\varepsilon_0$ to obtain $n_0$ and $\delta_0$.
Set $\delta_k=\frac{\delta_{k-1}\delta_0^2}{4k}$.
Finally, choose $n_k$ such that $n_k\ge \max\{n_0, n_{k-1}\}$ and
\[\left(\frac{1}{k}+\frac{1}{2k(k-1)}\right)^{4n_k+5}\delta_0^{8}\ge\frac{k}{k-1}\left(\frac{1}{k}\right)^{4n_k+5}.\]
The choice of $n_k$ yields that the following holds for all $n\ge n_k$:
\[(k-1)\left(\frac{1}{k}+\frac{1}{2k(k-1)}\right)^{4n^2+5n}\delta_0^{8n}\ge k\left(\frac{1}{k}\right)^{4n^2+5n}.\]

Let graphons $W_1,\ldots,W_k$ satisfying the assumption of the theorem be given and let $n\ge n_k$.
Further, let $p=t(K_2,W_1+\cdots+W_k)$ be the density of the graphon $W_1+\cdots+W_k$; note that $p\ge 1-\delta_k$.

We distinguish two cases.
First suppose that there exists $i\in [k]$ such that $\alpha_{p_0}(W_i)\ge\delta_0$,
i.e., one of the graphons $W_1,\ldots,W_k$ contains a large sparse part.
Note that this case includes the case that the density of one of the graphons is at most $p_0$.
By symmetry, we can assume that $\alpha_{p_0}(W_k)\ge\delta_0$.
Let $h:[0,1]\to [0,1]$ be such that $\|h\|_1\ge\delta_0$ and
\[\int_{[0,1]^2}h(x)W_k(x,y)h(y)\dd x\dd y\le p_0\|h\|_1^2.\]
Since it holds that
\[\sum_{i\in [k]}\int_{[0,1]^2}h(x)W_i(x,y)h(y)\dd x\dd y\ge \|h\|_1^2-\delta_k,\]
we obtain that
\begin{align*}
\sum_{i\in [k-1]}\int_{[0,1]^2}h(x)W_i(x,y)h(y)\dd x\dd y & \ge \|h\|_1^2-p_0\|h\|_1^2-\delta_k\\
                                           & \ge \|h\|_1^2\left(1-\frac{\delta_{k-1}}{2k}\right)\\
		 		 	   & \ge \|h\|_1^2\left(1-\frac{1}{2k}\right).
\end{align*}
Since it holds that
\[t(K_2,W_1[h]+\cdots+W_{k-1}[h])\ge 1-\frac{\delta_{k-1}}{2k}\ge 1-\delta_{k-1},\]
we can apply induction to $W_1[h],\ldots,W_{k-1}[h]$ and arrive at the following:
\begin{align*}
\sum_{i\in [k-1]} t(K_{2n,2n,C_5},W_i) & \ge \|h\|_1^{8n}\sum_{i\in [k-1]}t(K_{2n,2n,C_5},W_i[h]) \\
     & \ge \|h\|_1^{8n}(k-1)\left(\frac{1-1/2k}{k-1}\right)^{4n^2+5n}\\
     & \ge \delta_0^{8n}(k-1)\left(\frac{1}{k}+\frac{1}{2k(k-1)}\right)^{4n^2+5n}\\
     & \ge k\left(\frac{1}{k}\right)^{4n^2+5n}\ge\frac{p^{4n^2+5n}}{k^{4n^2+5n-1}}.
\end{align*}
Hence, in the following, we assume that $\alpha_{p_0}(W_i)<\delta_0$ for every $i\in [k]$. 
In particular, we assume that $t(K_2,W_i)\ge p_0$ for every $i\in [k]$
and so we can apply Lemmas~\ref{lm:Knn-local} and~\ref{lm:Knn-far} to each of $W_1,\dots,W_k$.

Based on whether it holds that $t(K_{2,2},W_i)\le t(K_2,W_i)^4+\varepsilon_0$ or not,
Lemma~\ref{lm:Knn-local} or Lemma~\ref{lm:Knn-far}, respectively, implies
\[t(K_{2n,2n,C_5},W_i)\ge t(K_2,W_i)^{4n^2+5n}\]
for every $i\in [k]$.
Therefore, we obtain that
\[\sum_{i\in [k]}t(K_{2n,2n,C_5},W_i)\ge \sum_{i\in [k]} t(K_2,W_i)^{4n^2+5n}\]
which is at least $k\left(\frac{p}{k}\right)^{4n^2+5n}$ by convexity.
This concludes the proof of the theorem.
\end{proof}

\section{Sidorenko and locally Sidorenko graphs}
\label{sec:Sidorenko}

In this section, we prove that a graph is $k$-common for all $k\geq2$ if and only if
it is Sidorenko and that
no graph of odd girth is locally $k$-common for any $k\ge 3$.
We start with the former statement.

\begin{proof}[Proof of Theorem~\ref{thm:Sidorenko}]
We first show that if a graph $H$ is Sidorenko, then it is $k$-common for every $k\in\NN$.
Fix a Sidorenko graph $H$ and an integer $k\ge 2$.
Let $W_1,\ldots,W_k$ be graphons such that $W_1+\cdots+W_k=1$ and
let $p_1,\ldots,p_k$ be their respective densities.
Note that $p_1+\cdots+p_k=1$.
Since $H$ is Sidorenko,
\[t(H,W_1)+\cdots+t(H,W_k)\ge p_1^{\|H\|}+\cdots+p_k^{\|H\|}\ge k\left(\frac{p_1+\cdots+p_k}{k}\right)^{\|H\|}=k^{-\|H\|+1}.\]
Therefore, $H$ is $k$-common.

To complete the proof, we need to show that
if a graph $H$ is not Sidorenko, then there exists $k\ge 2$ such that $H$ is not $k$-common.
Fix a graph $H$ that is not Sidorenko and
let $W$ be a graphon with density $p$ such that $t(H,W)<p^{\|H\|}$.
Set $\varepsilon=p^{\|H\|}-t(H,W)$.
By Lemma~\ref{lm:reg}, there exists a step graphon $W'$ with density $p$ such that
the cut distance between $W$ and $W'$ is at most $\varepsilon/(2\|H\|)$.
Lemma~\ref{lm:cutdist} implies that
\[t(H,W')\le t(H,W)+\varepsilon/2=p^{\|H\|}-\varepsilon/2.\]
By splitting each of the parts of $W'$ into the same number of equal size smaller parts,
we can assume that the number $m$ of parts of $W'$ satisfies
\[4\|H\|\le m\varepsilon\qquad\mbox{and}\qquad p^{\|H\|}-\varepsilon/4<(p-1/m)^{\|H\|}.\]
Let $A_1,\ldots,A_m$ be the parts of $W'$ and
let $d_{ij}$, $i,j\in [m]$ be the value of $W'$ on the tile $A_i\times A_j$.
Further, let $\delta$ be the average of $d_{ij}$ taken over all pairs $i$ and $j$ such that $1\le i<j\le m$ and
let $W''$ be the step graphon with the same $m$ parts as $W'$ obtained from $W'$
by making each of the $m$ diagonal tiles to be equal to $\delta$.
Note that the density of the whole graphon $W''$ is $\delta$ and $\delta\ge p-1/m$.
Since the cut distance between $W'$ and $W''$ is at most $m/m^2=1/m$,
Lemma~\ref{lm:cutdist} implies that
\begin{align*}
t(H,W'') & \le t(H,W')+\|H\|\cdot\dcut{W}{W'}\\
        & \le t(H,W')+\varepsilon/4\\
	& \le p^{\|H\|}-\varepsilon/4<(p-1/m)^{\|H\|}\le\delta^{\|H\|}.
\end{align*}
Next choose an integer $\ell\in\NN$ such that $1\le\delta\ell m!$ and set $k=\ell m!$.
We next define $k$ graphons that witness that $H$ is not $k$-common;
the $k$ graphons will be indexed by pairs consisting of a permutation $\sigma\in S_m$ of order $m$ and an integer $s\in [\ell]$.
The graphon $W_{\sigma,s}$ for $\sigma\in S_m$ and $s\in [\ell]$
is the step graphon with $m$ parts $A_1,\ldots,A_m$ such that
the graphon $W_{\sigma,s}$ on a tile $A_i\times A_j$, $i,j\in [m]$,
is equal to $1/k$ if $i=j$ and
is equal to $\frac{d_{\sigma(i)\sigma(j)}}{k\delta}$ if $i\neq j$ (note that $\frac{d_{\sigma(i)\sigma(j)}}{k\delta}\le\frac{1}{k\delta}\le 1$).
Note that the density of each of the graphons $W_{\sigma,s}$ is $\frac{1}{k}$.
Moreover, the average value of all the $k$ graphons on any of the tiles is $\frac{1}{k}$.
Consequently, the $k$ graphons $W_{\sigma,s}$, $\sigma\in S_m$ and $s\in [\ell]$, sum to the $1$-constant graphon.
Since the homomorphism density of $H$ in each of the graphons $W_{\sigma,s}$, $\sigma\in S_m$ and $s\in [\ell]$,
is equal to $\frac{1}{(k\delta)^{\|H\|}}t(H,W'')<k^{-\|H\|}$,
it follows that $H$ is not $k$-common.
\end{proof}

We next show that locally $k$-common graphs for any $k\ge 3$
are precisely locally Sidorenko graphs (cf.~Theorem~\ref{thm:localSid}).

\begin{proof}[Proof of Theorem~\ref{thm:local}]
Fix an integer $k\ge 3$ for the proof, and
a graph $H$ with girth $\ell$ where $\ell$ is odd.

\begin{figure}
\begin{center}
\epsfbox{kcommon-2.mps}
\end{center}
\caption{The kernel $U$ used in the proof of Theorem~\ref{thm:local} for $\ell=5$.
         The origin of the coordinate system is in the top left corner.}
\label{fig:local}
\end{figure}

Let $A_1,\ldots,A_{2\ell}$ be any partition of the interval $[0,1]$ to $2\ell$ disjoint measurable sets,
each of measure $(2\ell)^{-1}$.
Consider a kernel $U$ defined as follows (also see Figure~\ref{fig:local}):
\[U(x,y)=\begin{cases}
         +1 & \mbox{if $x\in A_i$, $y\in A_j$, $\lceil i/\ell\rceil=\lceil j/\ell\rceil$ and $i=(j\pm 1)\bmod\ell$,} \\
         -1 & \mbox{if $x\in A_i$, $y\in A_j$, $\lceil i/\ell\rceil\not=\lceil j/\ell\rceil$ and $i=(j\pm 1)\bmod\ell$,} \\
	 0  & \mbox{otherwise.}
         \end{cases}\]
Let $G$ be a graph that has a vertex $v$ of degree one and let $v'$ be the neighbor of $v$.
Note that
\begin{align*}
t(G,U) & = \int_{[0,1]^{V(G)}}\prod_{uu'\in E(G)}U(x_u,x_{u'})\dd x_{V(G)} \\
       & = \int_{[0,1]^{V(G)\setminus\{v\}}}\prod_{\substack{uu'\in E(G)\\uu'\not=vv'}}U(x_u,x_{u'})\cdot \left(\int_{[0,1]}U(x_{v'},x_v)\dd x_v\right)\dd x_{V(G)\setminus\{v\}} \\
       & = \int_{[0,1]^{V(G)\setminus\{v\}}}\prod_{\substack{uu'\in E(G)\\uu'\not=vv'}}U(x_u,x_{u'})\cdot 0\dd x_{V(G)\setminus\{v\}} = 0
\end{align*}
We conclude that $t(G,U)=0$ for every graph $G$ with a vertex of degree one.

We next compute $t(C_{\ell},U)$. Observe that the product
$\prod_{i\in [\ell]}U(x_i,x_{(i+1)\bmod\ell})$ 
is non-zero for $x_1,\ldots,x_{\ell}\in [0,1]$ if and only if
there exists $j\in [\ell]$ such that
either $x_i\in A_{(i+j)\bmod\ell}\cup A_{(i+j)\bmod\ell+\ell}$ for every $i\in [\ell]$ or
$x_i\in A_{(\ell-i+j)\bmod\ell}\cup A_{(\ell-i+j)\bmod\ell+\ell}$ for every $i\in [\ell]$;
if the product is non-zero, then it is equal to one.
Hence, it follows that
\[
t(C_\ell,U) = \int_{[0,1]^\ell}\prod_{i\in [\ell]}U(x_i,x_{(i+1)\bmod\ell})\dd x_{[\ell]}
            = 2\ell\cdot\prod_{i\in [\ell]}|A_i\cup A_{i+\ell}|=\frac{2}{\ell^{\ell-1}} \]

We next consider the following graphons: $W_1=W_2=1/k+\varepsilon U$, $W_3=1/k-2\varepsilon U$ and $W_4=\cdots=W_k=1/k$.
We will estimate the homomorphism density of $H$ in $W_1,\ldots,W_k$ using Proposition~\ref{prop:epsU}.
Note that if $F$ is a subset of edges of $H$ such that $1\le |F|\le\ell$,
then $H[F]$ contains a vertex of degree one unless $H[F]$ is a union of a cycle of length $\ell$ and isolated vertices.
In particular, $t(H[F],U)=0$ for a set $F$ of $\ell$ edges unless $F$ is the edge set of a cycle of length $\ell$.
Using Proposition~\ref{prop:epsU}, we obtain that
\begin{align*}
t(H,W_1)+\cdots+t(H,W_k) & = 2t(H,1/k+\varepsilon U)+t(H,1/k-2\varepsilon U)\\ &+(k-3)t(H,1/k) \\
                         & = k^{-\|H\|+1}+2\cdot\frac{2m_{\ell}}{\ell^{\ell-1}}k^{-\|H\|+\ell}\varepsilon^{\ell}-\frac{2^{\ell+1}m_{\ell}}{\ell^{\ell-1}}k^{-\|H\|+\ell}\varepsilon^{\ell}\\ &+O(\varepsilon^{\ell+1}) \\
			 & = k^{-\|H\|+1}-\frac{(2^{\ell+1}-4)m_{\ell}}{\ell^{\ell-1}}k^{-\|H\|+\ell}\varepsilon^{\ell}+O(\varepsilon^{\ell+1})
\end{align*}
where $m_{\ell}$ is the number of cycles of length $\ell$ in $H$.
Since $2^{\ell+1}-4>0$, there exists $\varepsilon_0>0$ such that
\[t(H,W_1)+\cdots+t(H,W_k)<k^{-\|H\|+1}\]
for every $\varepsilon\in(0,\varepsilon_0)$.
We conclude that $H$ is not locally $k$-common, which completes the proof of the theorem.
\end{proof}

\section{Open problems}

We conclude with two open problems.
Theorem~\ref{thm:append} provides an example of a non-bipartite $k$-common graph for every $k\geq2$.
A natural next question is whether there exist $k$-common graphs of arbitrary large chromatic number. 
Currently, the only known example of a $2$-common graph of chromatic number greater than three is the 
$5$-wheel~\cite{HatHKNR12} and so this question is interesting even in the case $k=2$ and
$\ell\geq5$.

\begin{problem}
\label{prob:highChi}
For every $k\ge 2$ and $\ell\ge 4$, construct a $k$-common $\ell$-chromatic graph.
\end{problem}

The second problem stems from Theorem~\ref{thm:local}
which characterizes locally $k$-common graphs for $k\ge 3$.
Interestingly, we do not have a similar characterization of locally $2$-common graphs and
we even miss a natural conjecture for such a characterization.

\begin{problem}
Characterize graphs that are locally $2$-common.
\end{problem}

Locally $2$-common graphs include forests, all graphs with even girth, the triangle and the $5$-wheel in particular,
since these graphs are locally Sidorenko or $2$-common.
On the other hand, 
Cs\'oka, Hubai and Lov\'asz~\cite{CsoHL19} showed that for every graph $H$ containing $K_4$ and every $\varepsilon>0$,
there exists a kernel $U$ such that $\|U\|_{\infty}\le\varepsilon$ and
\[t(H,1/2+U)+t(H,1/2-U)<2^{-\|H\|}.\]
In particular, no graph containing $K_4$ is locally $2$-common.
We remark that the notion of locally common graphs used in~\cite{CsoHL19} is formally weaker than the notion used in this paper,
i.e., every graph locally $2$-common in the sense used in this paper is locally common in the sense used in~\cite{CsoHL19},
however, it is not obvious whether the converse holds.
For completeness, we present a simple argument that $K_4$ is not locally $2$-common,
which is based on a construction of Franek and R\"odl~\cite{FraR93} of a kernel $U$ such that
\[t(K_4,1/2+U)+t(K_4,1/2-U)\le 0.987314\times\frac{1}{32}\quad\mbox{and}\quad\int_{[0,1]}U(x,y)\dd y=0\]
for every $x\in [0,1]$.
For $z\in (0,1]$, define a kernel $U_z$ as
\[U_z(x,y)=\begin{cases}
           U(x/z,y/z) & \mbox{if $(x,y)\in [0,z]^2$,} \\
	   0 & \mbox{otherwise.}
	   \end{cases}\]
Since the cut norm of $U_z$ is at most $z^2$ and $t(K_4,1/2+U_z)+t(K_4,1/2-U_z)<1/32$ (here, we use that the kernel $U_z$
is ``$0$-regular''),
it follows that $K_4$ is not locally $2$-common.

\section*{Acknowledgement}

The authors would like to thank L\'aszl\'o Mikl\'os Lov\'asz for insightful comments on the graph limit notions discussed in this paper, and the two anonymous reviewers for preparing very detailed reports in an unusually fast way;
the comments of the reviewers helped to substantially improve the presentation of our results.

\bibliographystyle{bibstyle}
\bibliography{kcommon}

\begin{thebibliography}{10}
\providecommand{\url}[1]{\texttt{#1}}
\providecommand{\urlprefix}{URL }
\providecommand{\eprint}[2][]{\url{#2}}

\bibitem{BurR80}
S.~A. Burr and V.~Rosta: \emph{On the {R}amsey multiplicities of
  graphs---problems and recent results}, J. Graph Theory \textbf{4} (1980),
  347--361.

\bibitem{ConFS10}
D.~Conlon, J.~Fox and B.~Sudakov: \emph{An approximate version of {S}idorenko's
  conjecture}, Geom. Funct. Anal. \textbf{20} (2010), 1354--1366.

\bibitem{ConKLL18}
D.~Conlon, J.~H. Kim, C.~Lee and J.~Lee: \emph{Some advances on {S}idorenko's
  conjecture}, J. Lond. Math. Soc. \textbf{98} (2018), 593--608.

\bibitem{ConL17}
D.~Conlon and J.~Lee: \emph{Finite reflection groups and graph norms}, Adv.
  Math. \textbf{315} (2017), 130--165.

\bibitem{ConL18}
D.~Conlon and J.~Lee: \emph{{S}idorenko's conjecture for blow-ups} (2018),
  preprint arXiv:1809.01259.

\bibitem{CsoHL19}
E.~{Cs{\'o}ka}, T.~{Hubai} and L.~{Lov{\'a}sz}: \emph{Locally common graphs}
  (2019), preprint arXiv:1912.02926.

\bibitem{CumY11}
J.~Cummings and M.~Young: \emph{Graphs containing triangles are not 3-common},
  J. Comb. \textbf{2} (2011), 1--14.

\bibitem{Erd47}
P.~Erd\H{o}s: \emph{Some remarks on the theory of graphs}, Bull. Amer. Math.
  Soc. \textbf{53} (1947), 292--294.

\bibitem{Erd62}
P.~Erd\H{o}s: \emph{On the number of complete subgraphs contained in certain
  graphs}, Magyar Tud. Akad. Mat. Kutat\'o Int. K\"ozl. \textbf{7} (1962),
  459--464.

\bibitem{ErdS84}
P.~Erd\H{o}s and M.~Simonovits: \emph{Cube-supersaturated graphs and related
  problems}, in: Progress in graph theory ({W}aterloo, {O}nt., 1982) (1984),
  203--218.

\bibitem{Fox07}
J.~Fox: \emph{There exist graphs with super-exponential ramsey multiplicity
  constant}, Journal of Graph Theory \textbf{57} (2008), 89--98.

\bibitem{FoxW17}
J.~Fox and F.~Wei: \emph{On the local approach to {S}idorenko's conjecture},
  Electronic Notes in Discrete Mathematics \textbf{61} (2017), 459--465.

\bibitem{Fra02}
F.~Franek: \emph{On {E}rd{\H{o}}s's conjecture on multiplicities of complete
  subgraphs: lower upper bound for cliques of size 6}, Combinatorica
  \textbf{22} (2002), 451--454.

\bibitem{FraR93}
F.~Franek and V.~R\"odl: \emph{{$2$}-colorings of complete graphs with a small
  number of monochromatic {$K_4$} subgraphs}, Discrete Math. \textbf{114}
  (1993), 199--203.
\newblock Combinatorics and algorithms (Jerusalem, 1988).

\bibitem{FriK99}
A.~Frieze and R.~Kannan: \emph{Quick approximation to matrices and
  applications}, Combinatorica \textbf{19} (1999), 175--220.

\bibitem{Gir79}
G.~Giraud: \emph{Sur le probl\`eme de {G}oodman pour les quadrangles et la
  majoration des nombres de {R}amsey}, J. Combin. Theory Ser. B \textbf{27}
  (1979), 237--253.

\bibitem{Goo59}
A.~W. Goodman: \emph{On sets of acquaintances and strangers at any party},
  Amer. Math. Monthly \textbf{66} (1959), 778--783.

\bibitem{Hat10}
H.~Hatami: \emph{Graph norms and {S}idorenko's conjecture}, Israel J. Math.
  \textbf{175} (2010), 125--150.

\bibitem{HatHKNR12}
H.~Hatami, J.~Hladk\'y, D.~Kr\'a\v{l}, S.~Norine and A.~Razborov:
  \emph{Non-three-colourable common graphs exist}, Combin. Probab. Comput.
  \textbf{21} (2012), 734--742.

\bibitem{JagST96}
C.~Jagger, P.~\v{S}\v{t}ov\'\i \v{c}ek and A.~Thomason: \emph{Multiplicities of
  subgraphs}, Combinatorica \textbf{16} (1996), 123--141.

\bibitem{LiS11}
J.~L. Li and B.~Szegedy: \emph{On the logarithimic calculus and {S}idorenko's
  conjecture} (2011), preprint arXiv:1107.1153, accepted to Combinatorica.

\bibitem{Lor62}
G.~Lorden: \emph{Blue-empty chromatic graphs}, Amer. Math. Monthly \textbf{69}
  (1962), 114--120.

\bibitem{Lov11}
L.~Lov\'{a}sz: \emph{Subgraph densities in signed graphons and the local
  {S}imonovits-{S}idorenko conjecture}, Electron. J. Combin. \textbf{18}
  (2011), Paper 127, 21.

\bibitem{Lov12}
L.~Lov\'{a}sz: Large networks and graph limits, \emph{AMS Colloquium
  Publications}, volume~60, 2012.

\bibitem{LovS06}
L.~Lov\'{a}sz and B.~Szegedy: \emph{Limits of dense graph sequences}, J.
  Combin. Theory Ser. B \textbf{96} (2006), 933--957.

\bibitem{Nie}
S.~Nie\ss: \emph{Counting monochromatic copies of {$K_4$}: a new lower bound
  for the ramsey multiplicity problem} (2012), preprint arXiv:1207.4714.

\bibitem{Raz07}
A.~A. Razborov: \emph{Flag algebras}, J. Symbolic Logic \textbf{72} (2007),
  1239--1282.

\bibitem{Sid93}
A.~Sidorenko: \emph{A correlation inequality for bipartite graphs}, Graphs
  Combin. \textbf{9} (1993), 201--204.

\bibitem{Sid96}
A.~Sidorenko: \emph{Randomness friendly graphs}, Random Structures Algorithms
  \textbf{8} (1996), 229--241.

\bibitem{Sid89}
A.~F. Sidorenko: \emph{Cycles in graphs and functional inequalities}, Mat.
  Zametki \textbf{46} (1989), 72--79, 104.

\bibitem{Sid91}
A.~F. Sidorenko: \emph{Inequalities for functionals generated by bipartite
  graphs}, Diskret. Mat. \textbf{3} (1991), 50--65.

\bibitem{Spe11}
K.~Sperfeld: \emph{On the minimal monochromatic {$K_4$}-density} (2011),
  preprint arXiv:1106.1030.

\bibitem{Sze15}
B.~Szegedy: \emph{An information theoretic approach to {S}idorenko's
  conjecture} (2015), preprint arXiv:1406.6738.

\bibitem{Tho89}
A.~Thomason: \emph{A disproof of a conjecture of {E}rd{\H{o}}s in {R}amsey
  theory}, J. London Math. Soc. (2) \textbf{39} (1989), 246--255.

\bibitem{Tho97}
A.~Thomason: \emph{Graph products and monochromatic multiplicities},
  Combinatorica \textbf{17} (1997), 125--134.

\end{thebibliography}

\end{document}